\newenvironment{cases}{\left\{\begin{array}{ll}}{\end{array}\right.}
\newcommand{\Afun}{\mathop{\!\,\mathcal{A}}}
\newcommand{\Efun}{\mathop{\!\,\mathcal{E}}}
\newcommand{\Ffun}{\mathop{\!\,\mathcal{F}}}
\newcommand{\Gfun}{\mathop{\!\,\mathcal{G}}}
\newcommand{\Lfun}{\mathop{\!\,\mathcal{L}}}
\newcommand{\Sfun}{\mathop{\!\,\mathcal{S}}}
\newcommand{\Qfun}{\mathop{\!\,\mathcal{Q}}}
\newcommand{\argmin}{\mathop{\mathrm{arg\,min}}}
\newcommand{\sinc}{\mathop{\mathrm{sinc}}}
\newcommand{\vinf}{v_{\infty}}
\newcommand{\vzero}{v_{\circ}}
\newcommand{\tzero}{t_{\circ}}
\newcommand{\tmin}{t_{\min}}
\newcommand{\tmax}{t_{\max}}
\newcommand{\tinf}{t_{\infty}}
\newcommand{\tguess}{t_{\mathrm{guess}}}
\newcommand{\tsw}{t_{\sigma}}
\newcommand{\troot}{t_{\star}}
\newcommand{\ssw}{s_{\sigma}}
\newcommand{\tvzero}{t_{\vzero}}
\newcommand{\tvinf}{t_{\vinf}}
\newcommand{\svzero}{s_{\vzero}}
\newcommand{\szero}{s_{\circ}}
\newcommand{\smin}{s_{\min}}
\newcommand{\smax}{s_{\max}}
\newtheorem{theorem}{Theorem}
\newtheorem{lemma}[theorem]{Lemma}
\newtheorem{remark}{Remark}
\newtheorem{proposition}[theorem]{Proposition}
\newtheorem{corollary}[theorem]{Corollary}
\newproof{pf}{Proof}
\newenvironment{proof}{\begin{pf}}{\end{pf}}
\begin{document}

\begin{frontmatter}

\title{Semi-Analytical Minimum Time Solution for the Optimal Control\\ of a Vehicle subject to Limited Acceleration}
\author[EB]{Enrico Bertolazzi}\ead{enrico.bertolazzi@unitn.it}
\author[MF]{Marco Frego}
\address[EB]{DII - Dipartimento di Ingegneria Industriale}
\address[MF]{DISI - Dipartimento di Ingegneria e Scienza dell'Informazione}


\begin{abstract}
A semi-analytic solution of the minimum time optimal 
control problem of a car-like vehicle is herein presented.  The vehicle is subject 
to the effect of laminar (linear) 
and aerodynamic (quadratic) drag, taking into account the asymmetric
bounded longitudinal accelerations. This yields a nonlinear differential equation
of Riccati kind. 
The associated analytic solution exists in closed form, but is 
numerically ill conditioned in some
situations, hence it is reformulated using asymptotic expansions
that keep the numerical computations accurate and robust in all cases.

Therefore the proposed algorithm is designed to be fast and robust 
in sight to be the fundamental module of a more extended optimal 
control problem that considers a given clothoid curve as the trajectory and 
the presence of a constraint on the lateral acceleration of the vehicle. 
The numerical stability of the computation for limit values of the parameters is essential as 
showed in the numerical tests.
\end{abstract}

\begin{keyword}
Nonlinear Dynamic, Optimal Control, Semi-analytic solution,  Asymptotic Expansion, Riccati ODE, Clothoid.
\end{keyword}

\end{frontmatter}

\section{Introduction and Motivation}
\label{sec:intro}

An important objective in the field of mobile robotics since the first papers 
of Dubins~\cite{dubins1957curves} is to find the optimal trajectory between two points with specified tangents
vectors and velocities. Moreover, other non dynamic constraints~\cite{Bertolazzi:2006kc,Gerdts:2013,rizano2013global} are considered, such 
as the geometric continuity of the path~\cite{Rao:2014b,Diehl:2009}, the restrictions on the maximum curvature~\cite{DaLio2014,Frazzoli:2014}, the avoidance 
of obstacles~\cite{Amditis:2010fx,Bertolazzi:2010ud,Gerdts:2013b}. There are mainly two global views of this problem: the first 
tries to solve in one shot the generation of the path and the optimal law to travel it, the 
second splits the problem into two separate parts: path generation and optimisation of the dynamics. 
The first approach is done essentially numerically, considering very complex vehicle models,
geometric constraints, and various other characteristics which approximate the real world 
in a very satisfactory way. The numeric solution obtained in this way is a formidable task by itself
and it requires a deep knowledge of the physical problem and an efficient optimal control 
solver~\cite{Bertolazzi:2006kc,Buskens:2013aa,Diehl:2011,Rao:2014,ICLOCS:2010}.
These solvers transform the problem into a Nonlinear Programming
Problem (NLP) or into a Boundary Value Problem (BVP) that is efficiently solved by numerical
methods~\cite{Ascher:1995,Boyd:2014,Mazzia:2002,mazzia:2011,nocedal:2006,Biegler:2005}.

It is not possible to solve those problems analytically (and thus precisely and quickly)
because they are too complex or simply they do not possess a closed form solution. 
Even when the analytic or semi-analytic solution exists~\cite{Velenis:2005aa,Velenis:2008},
it can be numerically ill conditioned or of impractical use, as in the present case.
The purely numerical solutions are in general very good and 
accurate, but at the price of a very high computational cost, which implies that these
methods are not suitable for real time computation. 
Thus existing numerical solver cannot be used in online simulations,
where a quick feedback, e.g. in the case of an unexpected situation or a sudden event,
must be managed.
\\
On the other hand, splitting the problem in two sub-problems allows us to consider them 
separately, with the reasonable expectation that they become easier to tackle and admit an analytical solution. 
This is not usually the case,
unless the problem is simplified via assumptions on the hypotheses and via linearisation of the 
differential equations of the dynamics. The advantage of such an approximate but analytic solution 
is that it is of quick evaluation and therefore suitable for real time applications. Complete
analytic solutions are seldom obtained and almost only on toy problems of limited application, 
however, semi-analytical solutions are often a good trade-off between low computational times and 
usefulness. \\
With this second framework in mind, in~\cite{ECC:2016} we solved the problem of a car-like vehicle 
travelling on a given clothoid, with assigned initial and final position, tangent and velocity.
This results in an optimal control problem with a Riccati ODE in the dynamical system.
The curve is traversed in minimum time, controlling the limited longitudinal acceleration and 
considering the bound on the maximum lateral acceleration of the vehicle, a quantity that 
connects the curvature of the path with the velocity of the vehicle,~\cite{ECC:2016}. 
The complete problem is made up of several blocks: the algorithm for solving the $G^1$ Hermite
Interpolation Problem with a clothoid~\cite{Bertolazzi:2015aa}, necessary to compute the path;
the description of the car-like model and its simplification in order to find analytical useful
solutions~\cite{ECC:2016}, 
the development of a fast and reliable tool that solves the ODEs of the basic optimal control
(which is the aim of the present work), and
a solution method for the optimal control problem with the constraint on the lateral acceleration.
All these blocks are used inside a high level optimiser that constructs the trajectory of the car-like
vehicle along a sequence of given points~\cite{FregoBBFP16Automatica,FregoBBFP16CDC,rizano2013global}.

In this work, we focus on the fast and accurate numerical computation of
the analytic solution of an optimal control problem arising from a Riccati dynamical system.
We remark, since  the limit of the lateral acceleration is a function of the state only
(and not of the control variable),  
that we can solve the complete problem in two phases.
In the first phase, we solve the OCP accounting for
the longitudinal constraint only. 
The problem turns out
to be classically Bang-Bang, yielding a switched hybrid system. We obtain analytic expressions for
the optimal state variables as well as robust numerical routines to evaluate them in all cases, e.g.,
when the input parameters rise computational instabilities.  Moreover, the various combinations
of possible initial and final conditions produce many different analytic solution that are worth
being studied independently because it is possible to synthesize them with only two stable expressions, even when 
the parameters of the problem lead to numerical instabilities.
In the second phase, which is not scope of this work, we combine the 
obtained solutions together with the bound on the lateral acceleration in order to synthesize the optimal control 
as it is sketched in~\cite{ECC:2016}. \\ 

In Section~\ref{sec:ocp} we present the Optimal Control Problem (OCP) with the description
of the Riccati differential equation formulated with the time as the independent variable,
we show the Bang-Bang nature of the solution and how to find the single switching point and
the final total time of the maneuver, assuming that the analytic solutions of the ODEs 
of problem are available. 
The derivation of such analytic solutions is devoted to 
Section~\ref{sec:solution}, where we give also the definition 
domains of the velocty  $v(t)$ and space $s(t)$.
The aim of Section~\ref{sec:stable} is to provide numerically stable versions of the solutions
previously obtained. In Section~\ref{sec:v_of_s} we introduce the change of variable from 
time $t$ to space $s$ for the velocity. In Section~\ref{sec:ocp:s} we reformulate and solve
the OCP presented in Section~\ref{sec:ocp} but reformulated with the space as independent variable.
Its solution is one nonlinear equation that we solve with the application of Newton's method.
Section~\ref{sec:tests} shows the practical implementation of the stable equations presented 
in four different test cases for different variations of the parameters.
The last section are the conclusions with some insights in application of the present work
as a building block for more complex computations and OCP problems.

\section{The Optimal Control Problem}\label{sec:ocp}
The minimum time optimal control problem of a car-like vehicle that follows any smooth path, 
neglecting for the moment any constraints on the lateral acceleration, is~\cite{ECC:2016}:

Find $a(t)\in[-\ab,\ap]$ that minimizes the total time $T$ subject to:\\
\begin{EQ}[rcl]\label{OCP}
   s'(t) &=& v(t), \\
   v'(t) &=& \acc(t)-c_0v(t)-c_1v^2(t),\qquad  -\ab\leq a(t) \leq \ap,\\
   s(0) &=& 0,\qquad s(T) = L, \qquad v(0) = v_i,\qquad v(T) = v_f,
\end{EQ}
where $T$ is the final time to be optimised, $s(t)$ represents the space variable in 
curvilinear coordinates, $v(t)$ is the velocity of the vehicle, $\acc(t)$ is
the controlled acceleration asymmetrically bounded for positive $\ab$ and $\ap$, the laminar friction
is given by the nonnegative coefficient $c_0$, the aerodynamic drag is modelled by the nonnegative
coefficient $c_1$. All the
boundary conditions are fixed: without loss of generality the initial position is $0$,
the final position is $L$ (the length of the curve), the initial and final velocities are
$v_i$ and $v_f$, respectively, both assumed nonnegative.\\
The solution of the OCP can be obtained via general optimal control software, as 
described~\cite{Bertolazzi:2006kc,Betts:2010,Betts:2007,Biral:2014aa,Campbell:2016,Diehl:2011,Rao:2014}, 
where various methods for OCP are presented. However they cannot take advantage of the 
analytic solution and hence, although being more general purpose, they are slower than 
the present method when applied to this specific problem.\\ 

We proceed by showing the nature of the optimal control using the Maximum Principle of
Pontryagin. The Hamiltonian of the minimum time optimal control problem~\eqref{OCP} is
\begin{EQ}
  \mathcal{H}(s,v,\lambda_1,\lambda_2,\acc) = 1  +\lambda_1v + \lambda_2 ( \acc-c_0v-c_1v^2)
\end{EQ}
and the control $\acc$ appears linearly. Its optimal synthesis is obtained from Pontryagin's Maximum (minimum) Principle (PMP), and is
\begin{EQ}\label{ocp}
\acc(t) = \argmin_{\acc\in[-\ab,\ap]} \mathcal{H}(s(t),v(t),\lambda_1(t),\lambda_2(t),\acc) = 
  \begin{cases}
    \ap  & \textrm{if $\lambda_2(t)<0$},\\
    -\ab & \textrm{if $\lambda_2(t)>0$},\\
    \acc_{\mathrm{sing}} & \textrm{if $\lambda_2(t)=0$}.
  \end{cases}
\end{EQ}
Hence the solution of the PMP produces a typical Bang-Bang controller.
The term $\acc_{\mathrm{sing}}$ represents
a possible singular control when $\lambda_2$ is identically zero on an interval.
  The equations for the adjoint variables are
 \begin{EQ}
   \lambda_1' = 0, \qquad
   \lambda_2' = \lambda_1 (c_0+2c_1v)-\lambda_1.
\end{EQ}
As the state variables have complete boundary conditions, no initial or final conditions are required for the two multipliers. The singular control $a_{\mathrm{sing}}$ is derived by using the above expressions for the multipliers, but according to a well known result in Optimal Control Theory~\cite{Athans:1966},
the solution of problem 
\eqref{OCP}, if it exists, has at most one switching
instant (denoted as $\tsw$) when the control value changes.  
\begin{lemma}
   The optimal control for problem \eqref{OCP}, if exists, has at most one switching 
   point and the control law is Bang-Bang.
\end{lemma}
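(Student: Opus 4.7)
The plan is to apply the Pontryagin Maximum Principle exactly as in~\eqref{ocp} and reduce the lemma to two substatements: (a) there is no singular subarc on which $\lambda_2(t)\equiv 0$, so the control law in~\eqref{ocp} never actually takes the indeterminate value $\acc_{\mathrm{sing}}$; and (b) the switching function $\lambda_2(t)$ has at most one zero on $(0,T)$. Granted both, the optimal control can only take the two extremal values $\ap$ and $-\ab$ and jumps between them at most once, which is exactly the Bang-Bang conclusion with a single switching instant $\tsw$.

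For (a) I would re-derive $\lambda_2'=-\partial\mathcal{H}/\partial v$ as $\lambda_2'=\lambda_2(c_0+2c_1v)-\lambda_1$, together with $\lambda_1'=0$ so that $\lambda_1$ is a constant. Suppose toward contradiction that $\lambda_2\equiv 0$ on some open interval; the adjoint equation then forces $\lambda_1=0$ there, and by constancy $\lambda_1\equiv 0$ on the whole $[0,T]$. Since the terminal time is free, the transversality condition $\mathcal{H}\equiv 0$ must hold along the optimum, but with $\lambda_1=\lambda_2=0$ the Hamiltonian reduces to the constant $1$. This contradiction rules out singular arcs.

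For (b) I would integrate the linear ODE for $\lambda_2$ using the positive integrating factor $\mu(t)=\exp\!\bigl(-\!\int_0^t(c_0+2c_1v(\tau))\,d\tau\bigr)$ to obtain
\begin{equation*}
   \lambda_2(t) \;=\; \mu(t)^{-1}\Bigl[\,\lambda_2(0)-\lambda_1\int_0^t\mu(\tau)\,d\tau\,\Bigr].
\end{equation*}
Because $\mu>0$, the sign of $\lambda_2$ coincides with the sign of the bracket. If $\lambda_1=0$ the bracket is a nonzero constant (nontriviality of the adjoint together with $\mathcal{H}\equiv 0$ forces $\lambda_2(0)\neq 0$), so $\lambda_2$ keeps its sign and no switching occurs. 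If $\lambda_1\neq 0$ the bracket has constant-sign derivative $-\lambda_1\mu(t)$, hence is strictly monotone in $t$ and vanishes at most once; so does $\lambda_2$.

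The main obstacle I anticipate is step~(a): without invoking the free-final-time transversality $\mathcal{H}\equiv 0$ one cannot exclude a fully degenerate adjoint, so that identity must be carefully brought in. Once it is available, step~(b) is essentially a one-line consequence of the strict monotonicity of $t\mapsto\int_0^t\mu(\tau)\,d\tau$, and the Bang-Bang statement follows by combining (a) and (b) with the minimization rule~\eqref{ocp}.
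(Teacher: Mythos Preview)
Your argument is correct and follows essentially the same route as the paper, which merely cites~\cite{Athans:1966} and sketches that the multiplier $\lambda_2$ is strictly monotone, hence has at most one isolated zero and no singular arc. Your integrating-factor computation and the invocation of the free-final-time transversality condition $\mathcal{H}\equiv 0$ supply exactly the details that the paper defers to the reference.
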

\begin{proof}
  See \cite{Athans:1966}. The idea is that the multiplier $\lambda_2$, in absence of bounds on the state variables, is strictly monotone increasing, hence there is at most one isolated zero and no singular arcs can exist.\qed
\end{proof}

This implies that the 
optimal control has to be chosen from a family of four candidate controls.  
The first and second correspond to
pure acceleration (i.e., $\acc(t) \equiv \ap$) or braking manoeuvre (i.e., $\acc(t) \equiv -\ab$), the third and fourth are a
combination of the two.
 Having explicit
expressions for the optimal states, with the complete boundary conditions,
the solution of each case is obtained by the
solution of a nonlinear system in two unknowns: the switching
time $\tsw$ and the final time $T$. It is possible to rule out the case of a
braking manoeuvre followed by an acceleration because it is not
optimal. Moreover, the cases of pure acceleration or braking are very
unlikely, because the boundary conditions should exactly satisfy the
boundary value problem \eqref{OCP}. Hence, only the case of
acceleration and braking with a switching time (possibly degenerate zero length 
interval at the extrema of the time interval)
is herein considered. It follows that the solution is given by the
intersection at the switching point $\tsw$ of the two curves of the
velocity and the space $v(t)$ and $s(t)$. The closed form solution of
$v(t)$ and $s(t)$ and its accurate computation is a difficult and important part, therefore it is
postponed to Section \ref{sec:solution}. \\
We are here interested to find the existence conditions for the optimal control of problem 
\eqref{OCP}. They must depend on the assigned initial and final positions and velocities. 
We have to ensure that there is enough acceleration (respectively deceleration) 
together with the initial and final velocities to travel the curve. Those values should be 
compatible with the extremal manoeuvres of pure acceleration and pure braking. 

If the final 
velocity $v_f$ is greater than the reference velocity $v_f^{\max}$ obtained starting 
from $v_i$ with maximum acceleration, then the problem does not have solution. 
Reference velocity $v_f^{\min}$ is the final velocity obtained starting 
from $v_i$ with maximum deceleration and $v_f^{\min}=0$ if the maximum deceleration
produces a solution with $v(t)=0$ for $s(t) < L$, i.e., the
vehicle is stopped before to reach the final distance $L$.
If the final velocity $v_f$ is lower than the reference 
velocity $v_f^{\min}$ the problem is not solvable.

In the other cases a solution exists. Those limits values are discussed in the next sections, together with the analytic expression of the integration of the differential equations.

The complete solution of the Optimal Control Problem~\ref{OCP} is reduced
to finding the optimal switching instant $\tsw$ and the final minimum time $T$. This is done equating the arcs of positive acceleration of velocity and space with the corresponding arcs of negative acceleration. It results a system of two nonlinear equations in the unknowns $\tsw$ and $T$:
\begin{EQ}\label{eq:sys_bb}
  v_L(\tsw) = v_R(\tsw-T),\qquad
  s_L(\tsw) = s_R(\tsw-T). 
\end{EQ}
Where $s_L(t)$ and $v_L(t)$ are the solutions of the ODE 
\begin{EQ}
   \left\{\begin{array}{r@{\,}c@{\,}lr@{\,}c@{\,}l}
   s'(t) &=& v(t),\quad & s(0)&=&0 \\
   v'(t) &=& \ap-c_0v(t)-c_1v^2(t),\quad & v(0)&=&v_i \\
   \end{array}\right.   
\end{EQ}
whereas $s_R(t)$ and $v_R(t)$ are the solutions of the ODE 
\begin{EQ}
   \left\{\begin{array}{r@{\,}c@{\,}lr@{\,}c@{\,}l}
   s'(t) &=& v(t),\quad & s(T)&=&L \\
   v'(t) &=& -\ab-c_0v(t)-c_1v^2(t),\quad & v(T)&=&v_f \\
   \end{array}\right.   
\end{EQ}
The next parts of the present work are devoted to the computation of the analytic
expressions of $s(t)$, $v(t)$ and to the study their numeric stable implementation.
The nonlinear system~\eqref{eq:sys_bb} can be further reduced to the computation
of the zero of a single nonlinear equation. From this reduction it is possible to show
that there is a unique solution and that the Newton method always converges. However this
reduction is possible at the price of an involved change of variable, hence we prefer to 
solve directly system~\eqref{eq:sys_bb} which is well behaved with a small computational cost.
\section{Formal Analytic Solution of Riccati ODE}\label{sec:solution}
From the analysis of the optimal control problem in the previous section, we conclude that the second differential equation of \eqref{OCP}, i.e.,
\begin{EQ}\label{eq:riccati}
  v'(t)=\acc(t)-c_0v(t)-c_1v(t)^2, \quad v(0)=\vzero\geq 0,
\end{EQ}
should be considered only for piecewise constant values of the control $\acc(t)$. This simplifies a lot the integration of the ODE, which is a differential equation of Riccati that can be solved only for a small class of functions $\acc(t)$, \cite{abramowitz:1964,nist:2010,zaitsev:2002}.
The constant functions are in this class and allow to solve the ODE by means of separation of variables or transforming the Riccati into a Bernoulli and then with standard techniques into a linear ODE. The shape of the solution varies according to the value of the initial condition, see Figure \ref{fig:cases:v} for a graphic reference.

For constant positive accelerations $\acc>0$ and positive $c_1>0$, the threshold is given by the asymptotic value $\vinf$.
For initial conditions $\vzero<\vinf$, the velocity is monotone increasing towards  the asymptote $\vinf$, 
for $\vzero>\vinf$ the velocity is monotone decreasing towards the asymptotic value $\vinf$, lastly,
if $\vzero=\vinf$ the velocity is constant.

For negative accelerations $\acc<0$, the situation is slightly different, and the sign of the quantity
$c_0^2+4\acc c_1$ must be considered: this leads to two cases. 
Also the cases of $\acc=0$, $c_0\approx 0$ and/or $c_1\approx 0$ should be analysed, because they originate
 numerical difficulties when evaluating the analytic expression of $v(t)$. Once the explicit expression for $v(t)$ is given, the integration of the ODE for the space variable $s'=v$ is possible in closed form. We have collected the various events in different cases, that we will discuss next.\\
\begin{figure}[!tb]
  \begin{center}
    \begin{tabular}{cc}
      \subfigure[Velocity plot for $\acc\geq0$. Cases (a), (b) and (c). Case (a) is the horizontal  
                 line of constant velocity $v(t)=v_{\infty}$. 
                 The lower curve represents case (b), when $\vzero$ is lower than
                 the asymptotic velocity $\vinf$; case (c) is the upper curve, when 
                 $\vzero$ is higher than $\vinf$. ]
      {\label{fig:cases:v:a}\includegraphics[scale=0.9]{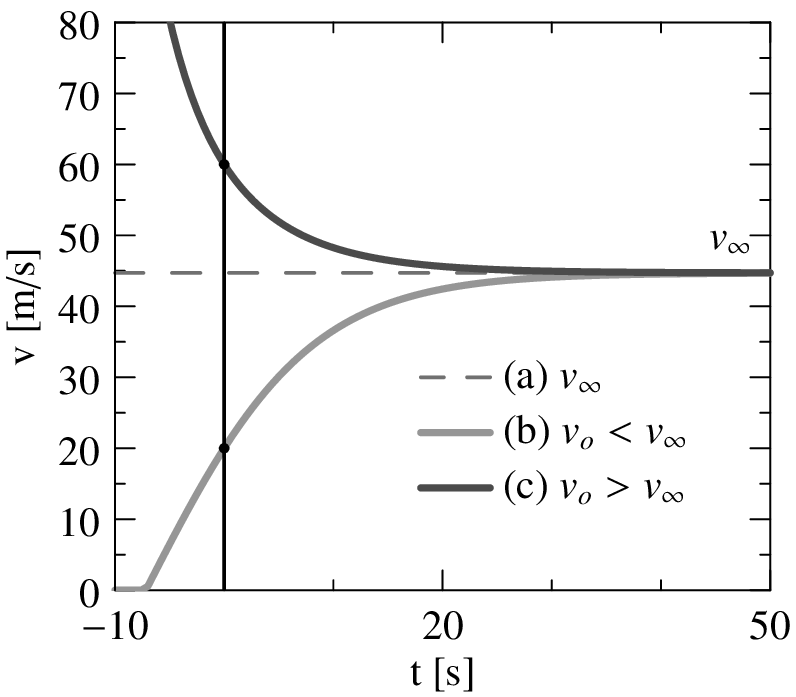}}
      &
      \subfigure[Velocity plot for $\acc<0$.  Cases (d) and (e). Arcs indicate negative constant acceleration $\acc=-\ab$. 
                 There are two cases according to $w$. When $w$ is real, we fall in case (d), when $w$ is complex 
                 we fall in case (e).]
      {\label{fig:cases:v:b}\includegraphics[scale=0.9]{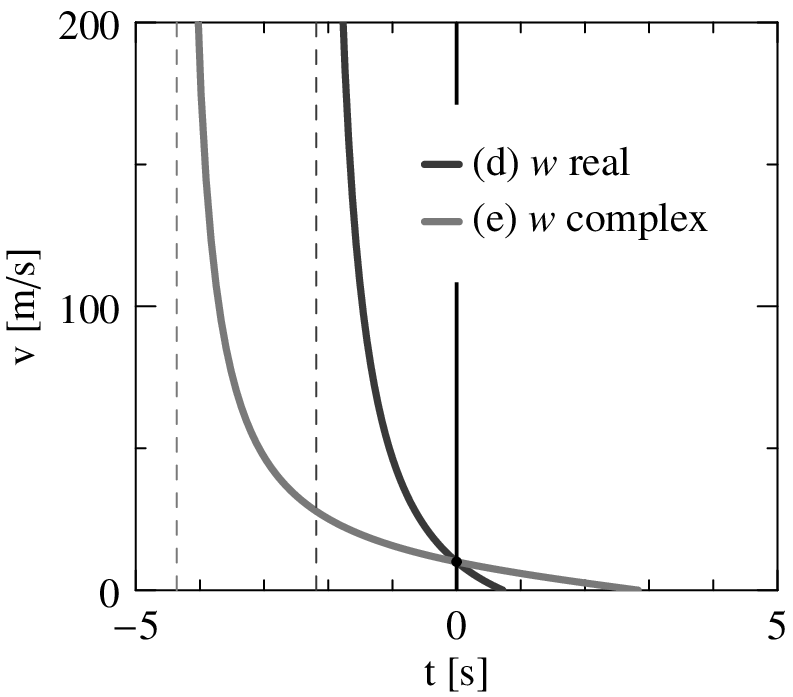}}%
      \end{tabular}
   \end{center}
   \vspace{-1em}
   \caption{The five cases of velocity and travelled space as listed in Lemma \ref{lem:velocity}. In all cases, the acceleration
                 is constant and positive. The green arcs indicate positive constant control $\acc=\ap$, red arcs have negative constant acceleration $\acc=-\ab$.}
   \label{fig:cases:v}
   \end{figure}
A straightforward integration of \eqref{eq:riccati} with a general initial condition $\vzero$ yields a long expression, which involves inverse trigonometric functions and their hyperbolic analogue according to the sign of the acceleration, the integration of the velocity has those functions nested inside a logarithm, making $v(t)$ and $s(t)$ difficult to accurately evaluate. Thus, in some of the cases presented next, the solution of equation~\eqref{eq:riccati} is manipulated in order to use a more compact and stable formula with some auxiliary constants that are here introduced:
\begin{EQ}[rclrclrcl]\label{eq:constants}
          w     &\DEF & \sqrt{ c_0^2+4\acc c_1 }, \quad &
     \alpha     &\DEF & \dfrac{w+c_0}{2}, \quad &
     \beta      &\DEF & \dfrac{w-c_0}{2}, \\
     \gamma     &\DEF & c_1\vzero+\alpha,\quad &
     \vinf      &\DEF & \dfrac{\beta}{c_1}=\dfrac{\acc}{\alpha}, \quad &
     \acczero   &\DEF & (c_1\vzero+c_0)\vzero \\
     \theta     &\DEF& \arctan\dfrac{\abs{w}}{c_0},\quad&
     \theta_0   &\DEF& \arctan\dfrac{\vzero\abs{w}}{\vzero c_0+2\abs{\acc}},\quad &
     \theta_1   &\DEF& \arctan\dfrac{\abs{w}}{2c_1\vzero+c_0}
\end{EQ}
Notice that according to the values of $c_0$, $c_1$ and $\acc$, the value $w$ can be real (w.l.o.g. assumed positive) or complex and $\theta_1=\theta-\theta_0$.
The constants in the first two lines of \eqref{eq:constants} are used to simplify expressions when $w$ is real, the last line is used in the cases of $w$ complex.
\noindent We are now able to state the main result of this section.
\begin{lemma}\label{lem:vel}
The analytic solution of \eqref{eq:riccati} with initial velocity $\vzero$
takes the following forms:
\begin{EQ}\label{eq:v}
   v(t) = 
   \vzero + \dfrac{(\acczero-\acc)\Efun(t,w)}{1-\gamma\Efun(t,w)}\; w\geq 0;
   \quad
   v(t) =
   \dfrac{\sin\left(\theta_0-\frac{1}{2}t\abs{w}\right)}
         {\sin\left(\theta_1+\frac{1}{2}t\abs{w}\right)}
         \dfrac{\sqrt{\abs{\acc}}}{\sqrt{c_1}},\; w\in\mathbbm{C}\setminus\mathbbm{R}\quad
\end{EQ} 
where $\Efun(t,w)\DEF(1-\mathrm{e}^{wt})/w$ and $\theta$, $\theta_0$ and $\theta_1$ are defined in~\eqref{eq:constants}.
The solution $v(t)$ is meaniningful only for finite nonnegative values.
\end{lemma}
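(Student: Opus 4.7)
The plan is the classical Riccati linearization. Setting $v(t)=u'(t)/(c_1 u(t))$ in \eqref{eq:riccati} converts the nonlinear equation into the constant-coefficient linear ODE $u''+c_0\,u'-c_1\acc\,u=0$, whose characteristic roots are $\beta$ and $-\alpha$ thanks to the Vieta-type identities $\alpha+\beta=w$, $\alpha-\beta=c_0$ and $\alpha\beta=c_1\acc$ that are immediate from \eqref{eq:constants}. In each of the two cases of the lemma, the closed form is then obtained by writing the appropriate general solution for $u$, fixing the ratio of the two integration constants from $v(0)=\vzero$, and finally rewriting $u'/(c_1 u)$ in the announced compact form.

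For $w\geq 0$ I would take $u(t)=A\,e^{\beta t}+B\,e^{-\alpha t}$, use the initial condition to obtain $A/B=\gamma/(\beta-c_1\vzero)$, and substitute into $v=u'/(c_1 u)$. Multiplying numerator and denominator by $e^{\alpha t}$ exposes a single $e^{wt}$, and the replacement $e^{wt}=1-w\Efun(t,w)$ combined with systematic use of the three identities above collapses the expression to $v(t)=[\vzero-(\gamma\beta/c_1)\Efun]/(1-\gamma\Efun)$. Recasting this as $\vzero+(\acczero-\acc)\Efun/(1-\gamma\Efun)$ reduces to the algebraic identity $\acczero-\acc=(c_1\vzero+\alpha)(c_1\vzero-\beta)/c_1$, which follows by expanding $\acczero=(c_1\vzero+c_0)\vzero$ and using $\alpha-\beta=c_0$ together with $\alpha\beta=c_1\acc$; the initial condition $v(0)=\vzero$ is automatic since $\Efun(0,w)=0$.

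When $w$ is purely imaginary the characteristic roots become $-c_0/2\pm i|w|/2$, so I would write $u(t)=e^{-c_0 t/2}\sin(\phi+|w|t/2)$ for an unknown phase $\phi$. Differentiating and collecting the combination $-(c_0/2)\sin+(|w|/2)\cos$ into a single sinusoid of amplitude $\sqrt{c_0^{2}+|w|^{2}}/2=\sqrt{c_1|\acc|}$ (using the crucial identity $c_0^{2}+|w|^{2}=4c_1|\acc|$) produces $v(t)=\sqrt{|\acc|/c_1}\,\sin(\theta-\phi-|w|t/2)/\sin(\phi+|w|t/2)$. The condition $v(0)=\vzero$ then gives $\tan\phi=|w|/(c_0+2c_1\vzero)$, so $\phi=\theta_1$; applying the tangent-subtraction identity to $\tan(\theta-\theta_1)$ and simplifying the denominator once more with $c_0^{2}+|w|^{2}=4c_1|\acc|$ yields $\vzero|w|/(\vzero c_0+2|\acc|)=\tan\theta_0$, hence $\theta-\phi=\theta_0$ and the announced closed form follows.

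The main obstacle is the algebraic simplification in the real-valued case: the raw ratio of exponentials produced by the linearization does not obviously fit the compact $\Efun(t,w)$ form used throughout the paper, and matching the two requires simultaneous use of all three Vieta relations together with the non-trivial regrouping of the constant term as $\acczero-\acc$. The complex case is, by comparison, routine once the amplitude--phase combination of the two trigonometric terms has been identified, and the meaningfulness restriction at the end of the statement simply records the fact that both formulas cease to be valid once the denominator $1-\gamma\Efun$, respectively $\sin(\theta_1+|w|t/2)$, vanishes.
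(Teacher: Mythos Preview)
Your derivation is correct, but it takes a genuinely different route from the paper. The paper's proof is a one-line verification: it simply states that one checks directly that the expressions in \eqref{eq:v} satisfy the Riccati ODE~\eqref{eq:riccati}, which together with the (implicit) verification of the initial condition $v(0)=\vzero$---immediate from $\Efun(0,w)=0$ in the real case and from the relation $\theta_0+\theta_1=\theta$ in the complex case---settles the lemma by uniqueness of solutions. You instead derive the formulas constructively via the standard linearization $v=u'/(c_1u)$, solve the resulting constant-coefficient second-order equation, and then push the raw exponential/trigonometric ratio into the paper's compact form using the Vieta relations $\alpha+\beta=w$, $\alpha-\beta=c_0$, $\alpha\beta=c_1\acc$ and, in the complex case, the key identity $c_0^2+|w|^2=4c_1|\acc|$. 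Your argument explains \emph{where} the formulas come from and why the constants $\gamma$, $\theta_0$, $\theta_1$ arise naturally, at the price of the algebraic regrouping you correctly flag as the main obstacle; the paper's approach is shorter and entirely sufficient for the statement as phrased, but leaves the origin of the closed forms unexplained.
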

\begin{proof}
It is straightforward to check that \eqref{eq:v} satisfies
the Riccati ODE~\eqref{eq:riccati}.\qed
\end{proof}

As remarked before, the sign of the argument of the square root in $w$ is fundamental for determining the shape of the solution of the ODE. With this notation, the asymptotic velocity, when there is one, can be written for $\acc= \ap$ as $v_{\infty}=\ap/\alpha\geq 0$.

\noindent The following lemma states the interval where the solution is defined.

\begin{lemma}\label{lem:velocity}
  The solution $v(t)$ of the problem~\eqref{eq:riccati}
  is finite and nonnegative for $t$ in the interval summarised in Table~\ref{tab:interval}
  and plotted in Figure \ref{fig:cases:v}.
  The values of $\tzero$, $\tinf$, $\tvzero$ and $\tvinf$ are the following:
  \begin{EQ}[rcll]\label{eq:t0:def}
     \tzero &=& \dfrac{1}{w}\log\Big(1-w\dfrac{\vzero}{\acc+\vzero\beta}\Big),
     \qquad & \vzero < v_{\infty} \textrm{ or } \acc < 0\\
     \tinf &=& \dfrac{1}{w}\log\Big(1-\dfrac{w}{\gamma}\Big)
     \qquad & \vzero > v_{\infty} \textrm{ or } \acc < 0
     \\
     \tvzero &=&
     \dfrac{2\theta_0}{\abs{w}},\qquad
     \tvinf = -\dfrac{2\theta_1}{\abs{w}},
     \qquad & w \textrm{ complex}
  \end{EQ}
  the value of $\theta_0$ and $\theta_1$ are given in~\eqref{eq:constants}.
\end{lemma}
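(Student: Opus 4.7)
The plan is to start from the explicit formulas of Lemma~\ref{lem:vel} and locate both the zeros of the numerator (where $v$ vanishes) and the zeros of the denominator (where $v$ becomes singular). These two kinds of events give the only possible endpoints of the maximal interval on which $v(t)$ is finite and nonnegative, because $v(t)$ is a smooth rational/trigonometric function of $t$ elsewhere. A short case analysis based on the sign of $\acc$ and the position of $\vzero$ relative to $\vinf$ then tells us which endpoint is active.

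For the real-$w$ branch I would work with $v(t) = \vzero + (\acczero-\acc)\Efun(t,w)/(1-\gamma\Efun(t,w))$. Setting $v(t)=0$ and solving for $\Efun(t,w)$ gives $\Efun(t,w) = \vzero/(\vzero\gamma-\acczero+\acc)$. Using the definitions in~\eqref{eq:constants} one checks the algebraic identity $\vzero\gamma-\acczero = \beta\vzero$, so the condition becomes $(1-e^{wt})/w = \vzero/(\acc+\vzero\beta)$, which solves explicitly to the stated expression for $\tzero$. Similarly, setting $1-\gamma\Efun(t,w)=0$ gives $\Efun(t,w)=1/\gamma$, hence $t = \tinf = w^{-1}\log(1-w/\gamma)$. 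It remains to check that the log arguments are positive in each subcase. For $\acc>0$ and $\vzero<\vinf$ the identity $w-\beta=\alpha$ reduces the positivity of $1-w\vzero/(\acc+\vzero\beta)$ to $\alpha\vzero<\acc$, i.e., $\vzero<\vinf$, and gives $\tzero<0$ as the left endpoint of the validity interval $[\tzero,\infty)$. The case $\vzero>\vinf$ is handled analogously through $\tinf$, and for $\acc<0$ both $\tzero$ and $\tinf$ exist with $\tinf<0<\tzero$, so the validity interval is $[\tinf,\tzero]$.

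For the complex-$w$ branch I would apply the same strategy to the trigonometric representation $v(t) = \sin(\theta_0-\tfrac{1}{2}t|w|)/\sin(\theta_1+\tfrac{1}{2}t|w|)\cdot\sqrt{|\acc|/c_1}$. The first zero of the numerator to the right of $t=0$ is obtained from $\theta_0-\tfrac{1}{2}t|w|=0$, yielding $\tvzero = 2\theta_0/|w|$. The nearest singularity is obtained from $\theta_1+\tfrac{1}{2}t|w|=0$, i.e., $\tvinf = -2\theta_1/|w|$. Since this branch arises only for $\acc<0$, the sign conventions in~\eqref{eq:constants} make $\theta_0>0$ and $\theta_1$ of the opposite sign needed so that $\tvinf<0<\tvzero$, and the maximal interval on which $v(t)$ is finite and nonnegative is $[\tvinf,\tvzero]$.

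The main obstacle is purely bookkeeping: the statement is split into several subcases according to whether $\acc$ is positive or negative and whether $\vzero$ lies below, above or at $\vinf$, and in each subcase one has to verify (i) that the quantity inside the logarithm (respectively the arctangent) is in the correct range, (ii) that the corresponding endpoint has the correct sign, and (iii) that between the endpoints the denominator of $v(t)$ does not vanish a second time and the numerator does not become negative. Item (iii) follows from the monotonicity of $\Efun(\cdot,w)$ in the real case and from the fact that the sine arguments sweep through a single sign-constant interval in the complex case, so no hidden zeros are missed. Once this tabulation is complete, assembling Table~\ref{tab:interval} and Figure~\ref{fig:cases:v} is immediate from the formulas just derived.
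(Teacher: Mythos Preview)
Your approach is correct and essentially identical to the paper's: both proofs locate the zeros of the numerator and denominator of the explicit formulas from Lemma~\ref{lem:vel} and then split into the five cases according to the sign of $\acc$ and the position of $\vzero$ relative to $\vinf$. The paper is in fact terser than your write-up (it dismisses the algebra as ``a simple computation of the zeros of numerators and denominators''), so your filled-in identities such as $\vzero\gamma-\acczero=\beta\vzero$ are a genuine addition; one cosmetic correction is that the intervals should be open at the singular endpoints $\tinf$ and $\tvinf$, since $v$ blows up there, matching Table~\ref{tab:interval}.
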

\begin{proof}
  We have 5 cases (summarized in Table~\ref{tab:interval}) that are function of the parameters
  $\acc$, $\vzero$, $c_0$ and $c_1$:
  \begin{description}
    \item[(case a)] This case corresponds to have $ \alpha \vzero = \acc $, the constant solution, i.e. when the r.h.s. 
                    of ODE~\eqref{eq:riccati} vanishes, i.e.
                    $\acc=c_0\vzero+c_1\vzero^2$. Its nonnegative solution is $\vzero = \acc/\alpha$;
    \item[(case b)] This case happens for $\alpha \vzero < \acc$ when the numerator of~\eqref{eq:v} is zero 
                    for some $t$. A simple computation shows that
                    the numerator has a single \emph{real} root for $\alpha \vzero < \acc$ or $\acc <0$.
                    For $\alpha \vzero < \acc$ the value of the root is $\tzero$ of equation~\eqref{eq:t0:def}.
    \item[(case c)] This case happens for $\alpha \vzero>\acc \geq 0 $ when the denominator of~\eqref{eq:v}  
                    is zero for some $t$. A simple computation shows that
                    the denominator has a single \emph{real} root only if $\acc < \alpha \vzero$ and the value of the root  
                    is $\tinf$ of equation~\eqref{eq:t0:def}.
    \item[(case d)] This case happens when the denominator of~\eqref{eq:v} is zero for some $\tinf$
                    and the numerator of~\eqref{eq:v} is zero for some $\tzero$.
                    This can happen only when $\acc<0$.
    \item[(case e)] Equation~\eqref{eq:v} has denominator zero for some $\tvinf$
                    and numerator zero for some $\tvzero$. This zeros are the border of the interval.
  \end{description}
  The remaining part of the proof is a simple computation of the zeros of numerators and denominators.\qed
\end{proof}
The velocity 
has to be positive, but this is true on some intervals only. Table \ref{tab:interval} summarises the intervals of nonnegative velocity in terms of time and also of travelled space.
\begin{table}[!tb]
  \begin{center}
  \caption{Interval of existence and where the solution of~\eqref{eq:riccati} is
           finite and nonnegative}\label{tab:interval}
  \def\arraystretch{1.2}
  \begin{tabular}{|cl|ll|c|}
    \hline
    case   & condition  & $(\tmin,\tmax)$   & $(\smin,\smax)$ & $v'(t)$ \\
    \hline
    (a) & $w\geq 0$ and $\vzero = v_{\infty}$ & $(-\infty,\infty)$ & $(-\infty,\infty)$  & $=0$ \\
    (b) & $w\geq 0$ and $\vzero < v_{\infty}$ & $[\tzero,\infty)$  & $[\szero,\infty)$   & $>0$ \\
    (c) & $w\geq 0$ and $\vzero>v_{\infty}$   & $(\tinf,\infty)$   & $(-\infty,\infty)$  & $<0$ \\
    (d) & $w\geq 0$ and $\acc < 0$            & $(\tinf,\tzero]$   & $(-\infty,\szero]$  & $<0$ \\
    (e) & $w$ complex                         & $(\tvinf,\tvzero]$ & $(-\infty,\svzero]$ & $<0$ \\
    \hline
  \end{tabular}
  \end{center}
\end{table}
We focus now on the other state variable, the space $s$. The integration of~\eqref{eq:v} permits
to define the travelled space and the explicit expression is contained in the next Lemma.
\begin{lemma}\label{lem:space}
Let $s(t) = \int_0^t v(\zeta)\,\mathrm{d}\zeta$
be the space travelled with velocity $v(t)$, then
\begin{EQ}\label{eq:s}
   s(t)
   =
   \begin{cases}
   \vinf t + c_1^{-1}\log\left(1-c_1(\vinf-\vzero)\Efun(-t,w)\right), & \textrm{for $w\geq 0$}\\
   c_1^{-1}\bigg(
     \log\bigg(\dfrac{\sin\left(\theta_1+\frac{1}{2}t\abs{w}\right)}
                     {\sin\theta_1}\bigg)
     -\dfrac{c_0}{2}t
   \bigg), & \textrm{for $w$ complex}
   \end{cases}
\end{EQ}
the value of $\theta_0$ and $\theta_1$ are defined in equation~\eqref{eq:constants}.
\end{lemma}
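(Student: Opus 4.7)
The plan is to verify the two closed-form expressions by direct differentiation, using the velocity formulas already established in Lemma~\ref{lem:vel}. Since the problem $s'(t)=v(t)$, $s(0)=0$ has a unique solution, it suffices to check that each candidate in~\eqref{eq:s} vanishes at $t=0$ and that its derivative reproduces the corresponding branch of~\eqref{eq:v}.

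For the real case ($w\ge 0$), the boundary condition is immediate: $\Efun(0,w)=0$ so $\log 1=0$ and $s(0)=0$. Differentiating,
\begin{EQ}
s'(t)=\vinf+\frac{1}{c_1}\cdot\frac{-c_1(\vinf-\vzero)\,\partial_t\Efun(-t,w)}{1-c_1(\vinf-\vzero)\Efun(-t,w)},
\end{EQ}
and since $\partial_t\Efun(-t,w)=e^{-wt}$, the derivative collapses to
\begin{EQ}
s'(t)=\vinf-\frac{(\vinf-\vzero)\,e^{-wt}}{1-c_1(\vinf-\vzero)\Efun(-t,w)}.
\end{EQ}
I would then rewrite $1-c_1(\vinf-\vzero)\Efun(-t,w)$ by multiplying numerator and denominator by $-e^{wt}/w$ and use the identities $\vinf=\beta/c_1$, $\gamma=c_1\vzero+\alpha$, $\beta-\alpha=-c_0$, $\alpha\beta=\acc c_1$ from~\eqref{eq:constants} together with $\acczero-\acc=-c_1(\vinf-\vzero)(c_1\vzero+\alpha)/c_1$. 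After simplification one obtains $s'(t)=\vzero+(\acczero-\acc)\Efun(t,w)/(1-\gamma\Efun(t,w))$, which matches the velocity in~\eqref{eq:v}.

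For the complex case, evaluating at $t=0$ gives $\log(\sin\theta_1/\sin\theta_1)-0=0$, so $s(0)=0$. Differentiating produces
\begin{EQ}
s'(t)=\frac{1}{2c_1}\Bigl(|w|\cot\!\bigl(\theta_1+\tfrac12 t|w|\bigr)-c_0\Bigr).
\end{EQ}
The task is then to show this equals $\sqrt{|\acc|/c_1}\,\sin(\theta_0-\tfrac12 t|w|)/\sin(\theta_1+\tfrac12 t|w|)$. Using $\theta=\theta_0+\theta_1$ and the defining relations $\tan\theta=|w|/c_0$, $\tan\theta_1=|w|/(2c_1\vzero+c_0)$, one has $|w|\cos\theta_1-c_0\sin\theta_1=2\sqrt{|\acc|c_1}\,\sin\theta_0$ (since $|w|^2=-4\acc c_1=4|\acc|c_1$ here), and more generally the shifted identity $|w|\cos(\theta_1+\tfrac12 t|w|)-c_0\sin(\theta_1+\tfrac12 t|w|)=2\sqrt{|\acc|c_1}\,\sin(\theta_0-\tfrac12 t|w|)$. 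Substituting this into the expression for $s'(t)$ yields exactly $v(t)$.

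I expect the trigonometric branch to be the real obstacle: the sine ratio for $v(t)$ hides the structure of the auxiliary angles, and the identity relating $|w|\cos-c_0\sin$ at the shifted argument to $\sin(\theta_0-\tfrac12 t|w|)$ must be derived carefully from the definitions in~\eqref{eq:constants}. An alternative route, which I would mention as motivation, is the standard Riccati linearization $v=-u'/(c_1 u)$, under which $s(t)=-c_1^{-1}\log(u(t)/u(0))$ directly; the logarithmic shape in both branches of~\eqref{eq:s} is then transparent, and one only has to identify $u(t)$ with the appropriate exponential or sinusoidal combination dictated by the sign of $c_0^2+4\acc c_1$.
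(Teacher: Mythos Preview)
Your verification-by-differentiation approach is correct: checking $s(0)=0$ and $s'(t)=v(t)$ suffices by uniqueness for $s'=v$, $s(0)=0$, and the identities you list for both branches do go through. The paper takes the inverse route in the real case: it integrates~\eqref{eq:v} directly to obtain first $s(t)=-c_1^{-1}\alpha t+c_1^{-1}\log(1-\gamma\Efun(t,w))$ and then massages this, via $\vinf c_1=\beta$ and $e^{-wt}\Efun(t,w)=-\Efun(-t,w)$, into the stated form. For the complex branch the paper is closer to you: it records a derivative identity for $\sin(ct-a)/\sin(ct+b)$ and substitutes $a=\theta_0$, $b=\theta_1$, $c=\tfrac{1}{2}|w|$, which is your computation packaged as a one-line quote. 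Your route is shorter and cleaner as a proof; the paper's forward integration has the advantage of showing how the formula is discovered rather than merely confirmed. One small slip: you write $|w|^2=-4\acc c_1$, but in the complex case $|w|^2=-c_0^2-4\acc c_1$; what your shifted identity actually uses (and what holds) is $|w|^2+c_0^2=4|\acc|c_1$, which you need for the companion relation $|w|\sin\theta_1+c_0\cos\theta_1=2\sqrt{|\acc|c_1}\cos\theta_0$. Your closing remark on the Riccati linearization $v=-u'/(c_1u)$, giving $s(t)=-c_1^{-1}\log(u(t)/u(0))$ directly, is a nice structural explanation for the logarithmic shape that the paper does not make explicit.
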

\begin{proof}
As with Lemma \ref{lem:velocity}, the expressions for $s(t)$ are obtained by standard integration techniques, it is an exercise to check that they satisfy $s'(t)=v(t)$. 
Integrating~\eqref{eq:v} and noticing that $\alpha\beta=\acc c_1$:
\begin{EQ}[rcl]
   s(t)
   &=& \dfrac{\alpha \vzero-\acc}{\beta-c_1\vzero} t+
   \dfrac{\acczero-\acc}{\gamma(\gamma-w)}\log\left(
   1-\gamma\Efun(t,w)
   \right)
   \\
   &=& \dfrac{\alpha \vzero-\acc}{\beta-c_1\vzero}\dfrac{\alpha+c_1\vzero}{\alpha+c_1\vzero} t+
   c_1^{-1}\log\left(
   1-(c_1\vzero+\alpha)\Efun(t,w)
   \right)
   \\
   &=& -c_1^{-1}\alpha t+
   c_1^{-1}\log\left(
   1-(c_1\vzero+\alpha)\Efun(t,w)
   \right).
\end{EQ}
Then using the equality $\vinf c_1=\beta$, we have
\begin{EQ}[rcl]
   s(t)
   &=& \vinf t
   -c_1^{-1}(\alpha+\beta) t+
   c_1^{-1}\log\left(
   1-(c_1\vzero+\alpha)\Efun(t,w)
   \right)
   \\
   &=& \vinf t + c_1^{-1}\log\left(
   \mathrm{e}^{-tw}-(c_1\vzero+\alpha)\mathrm{e}^{-tw}\Efun(t,w)
   \right)
   \\
   &=& \vinf t + c_1^{-1}\log\left(
   1-w\Efun(-t,w)+(c_1\vzero+\alpha)\Efun(-t,w)
   \right)
   \\
   &=& \vinf t + c_1^{-1}\log\left(
   1+(c_1\vzero-\beta)\Efun(-t,w)
   \right)
   \\
   &=& \vinf t + c_1^{-1}\log\left(
   1+(c_1\vzero-c_1\vinf)\Efun(-t,w)
   \right).
\end{EQ}
Now from the next equality, for constant $a$, $b$, $c$ and $d$,
\begin{EQ}
  \dfrac{\mathrm{d}}{\mathrm{d} t}\dfrac{(c\,t-a)\cos(a+b)-\log(\sin(c\,t+b))\sin(a+b)}{c}
  =
  \dfrac{\sin(c\,t-a)}{\sin(c\,t+b)}
\end{EQ}
and by posing $a=\theta_0$, $b=\theta_1$, $c=\abs{w}/2$ with the angle definitions of~\eqref{eq:constants}, we obtain~\eqref{eq:s}.\qed
\end{proof}
%
As a corollary, it is possible to determine the special values $\szero$ and $\svzero$. 
\begin{corollary}\label{cor:3.1}
The values of $\szero$ and $\svzero$
of Table~\ref{tab:interval} are:
\begin{EQ}\label{eq:s0sv0}
  \szero =
  \dfrac{1}{w}\left[\dfrac{\beta}{c_1}\log\left(1-\vzero\dfrac{c_1}{\beta}\right)+
  \dfrac{\alpha}{c_1}\log\left(1+\vzero\dfrac{c_1}{\alpha}\right)\right],
  \qquad
  \svzero =
  \dfrac{1}{2c_1}\left[
   \log\left(1+\dfrac{\acczero}{\abs{\acc}}
   \right)-c_0\tvzero\right],\quad
\end{EQ}
where $\tvzero$ is defined in \eqref{eq:t0:def}.
\end{corollary}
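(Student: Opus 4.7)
The plan is to prove both identities by direct substitution: evaluate the appropriate branch of the space formula~\eqref{eq:s} from Lemma~\ref{lem:space} at $t=\tzero$ (resp.\ $t=\tvzero$) given in Lemma~\ref{lem:velocity}, and then simplify using the algebraic relations between the auxiliary constants in~\eqref{eq:constants}. Throughout, I will lean on $\alpha+\beta=w$, $\alpha-\beta=c_0$, $\alpha\beta=\acc c_1$, $\alpha\vinf=\acc$, $c_1\vinf=\beta$, and $\gamma=\alpha+c_1\vzero$.

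For $\szero=s(\tzero)$ I first invert the definition of $\tzero$ in~\eqref{eq:t0:def} to get $e^{w\tzero}=1-w\vzero/(\acc+\vzero\beta)$. The denominator rewrites as $\acc+\vzero\beta=\vinf\gamma$, and a short manipulation gives $\vinf\gamma-w\vzero=\alpha(\vinf-\vzero)$, so $e^{w\tzero}=\alpha(\vinf-\vzero)/(\vinf\gamma)$; from here $\Efun(-\tzero,w)$ can be written explicitly and plugged into the $w\geq 0$ branch of~\eqref{eq:s}. The identity $\acc-\acczero=\gamma(\vinf-\vzero)$ collapses the logarithmic term to $c_1^{-1}\log(\gamma/\alpha)$, so that $s(\tzero)=(\beta/(c_1w))\log[\alpha(\vinf-\vzero)/(\vinf\gamma)]+c_1^{-1}\log(\gamma/\alpha)$. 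Splitting the first logarithm and using $1-\beta/w=\alpha/w$ regroups the whole expression into a $\beta$-weighted log of $(\vinf-\vzero)/\vinf=1-\vzero c_1/\beta$ plus an $\alpha$-weighted log of $\gamma/\alpha=1+\vzero c_1/\alpha$, which is the claimed form in~\eqref{eq:s0sv0}.

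For $\svzero=s(\tvzero)$ with $\tvzero=2\theta_0/|w|$, I substitute into the second (complex) branch of~\eqref{eq:s}. The sine argument telescopes to $\theta_0+\theta_1=\theta$, and the linear part produces exactly the $-c_0\tvzero/2$ contribution in~\eqref{eq:s0sv0}. It then remains to show $\sin\theta/\sin\theta_1=\sqrt{1+\acczero/|\acc|}$. Using $\sin^2\theta=|w|^2/(|w|^2+c_0^2)$ and $\sin^2\theta_1=|w|^2/(|w|^2+(2c_1\vzero+c_0)^2)$ together with the identity $|w|^2=4|\acc|c_1-c_0^2$ that holds in the complex branch (where $\acc<0$, so $w^2=c_0^2+4\acc c_1<0$), the ratio $(\sin\theta/\sin\theta_1)^2$ reduces to $1+(c_1\vzero+c_0)\vzero/|\acc|=1+\acczero/|\acc|$, as required.

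The main obstacle is the bookkeeping in the $\szero$ step: several products and quotients of the auxiliary constants must be rewritten in just the right order to split one compound logarithm into the two weighted logarithms appearing in~\eqref{eq:s0sv0}. The trigonometric part of the $\svzero$ derivation, by contrast, is essentially mechanical once one recognises that $|w|^2+c_0^2=4|\acc|c_1$ in the complex branch.
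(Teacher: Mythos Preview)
Your proposal is correct and follows essentially the same substitution-and-simplification strategy as the paper: evaluate the relevant branch of~\eqref{eq:s} at $\tzero$ (resp.\ $\tvzero$) and reduce using the identities among the constants in~\eqref{eq:constants}. The only noticeable variation is in the $\svzero$ step: the paper expands $\sin(\theta-\theta_0)$ via the angle-subtraction formula and then evaluates $\cos\theta_0-(c_0/|w|)\sin\theta_0$, whereas you compute $\sin^2\theta/\sin^2\theta_1$ directly from the $\tan$ definitions together with $|w|^2+c_0^2=4|\acc|c_1$; both routes arrive at the same ratio $1+\acczero/|\acc|$, and yours is arguably a bit more direct.
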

\begin{proof}
We use the relations
\begin{EQ}
   \Efun(-\tzero,w)=
   \dfrac{1}{w}\left(
   1-\dfrac{\gamma \vinf}{\gamma \vinf-w \vzero}\right)
   =\dfrac{\vzero}{w \vzero-\gamma \vinf}
   =\dfrac{\vzero}{w \vzero-\vzero\beta-\acc}
   =\dfrac{\vzero}{\alpha\vzero-\alpha\vinf}
\end{EQ}
and~\eqref{eq:t0:def} in~\eqref{eq:s}, the two values $s(\tzero)$ and $s(\tvzero)$ become respectively:
\begin{EQ}[rcl]
   s(\tzero)
   &=& \dfrac{\vinf}{w}\log\left(1-\dfrac{w}{\gamma}\dfrac{\vzero}{\vinf}\right) 
   + \dfrac{1}{c_1}\log\left(
   1-c_1(\vinf-\vzero)\Efun(-\tzero,w)
   \right)\\
   &=& 
   \dfrac{\vinf}{w}\log\left(1-\dfrac{w}{\gamma}\dfrac{\vzero}{\vinf}\right) 
   + \dfrac{1}{c_1}\log\left(
   1+c_1\dfrac{\vzero}{\alpha}
   \right)
   \\
   &=& 
   \dfrac{\beta}{w c_1}\log\left(1-c_1\dfrac{w}{\gamma}\dfrac{\vzero}{\beta}\right) 
   + \dfrac{\alpha+\beta}{w c_1}\log\left(
   1+c_1\dfrac{\vzero}{\alpha}
   \right)
   \\
   &=& 
   \dfrac{\beta}{w c_1}\log\left(1-c_1\dfrac{w}{\gamma}\dfrac{\vzero}{\beta}\right) 
   + \dfrac{\beta}{w c_1}\log\left(1+c_1\dfrac{\vzero}{\alpha}\right)
   + \dfrac{\alpha}{w c_1}\log\left(1+c_1\dfrac{\vzero}{\alpha}\right)
   \\
   &=& 
   \dfrac{1}{w}\left[
   \dfrac{\beta}{c_1}\log\left(1-\dfrac{c_1}{\beta}\vzero\right) 
   + \dfrac{\alpha}{c_1}\log\left(1+\dfrac{c_1}{\alpha}\vzero\right)\right],
  \end{EQ}
  and
  \begin{EQ}[rcl]
     s(\tvzero)+\dfrac{c_0}{2c_1}\tvzero
     &=&
     \dfrac{1}{c_1}
     \log\left(
     \dfrac{\sin(\theta-\theta_0+\frac{1}{2}\tvzero\abs{w})}{\sin(\theta-\theta_0)}
     \right)
     =
     \dfrac{1}{c_1}
     \log\left(
     \dfrac{\sin\theta}{\sin(\theta-\theta_0)}
     \right) \\
     &=&-
     \dfrac{1}{c_1}
     \log\left(
     \dfrac{\sin\theta\cos\theta_0-\cos\theta\sin\theta_0}{\sin\theta}
     \right)
     =
     -
     \dfrac{1}{c_1}
     \log\left(\cos\theta_0-\dfrac{\sin\theta_0}{\tan\theta}
     \right)
     \\
     &=&
     -
     \dfrac{1}{c_1}
     \log\left(\cos\theta_0-\dfrac{c_0}{\abs{w}}\sin\theta_0
     \right)
     =
     -
     \dfrac{1}{2c_1}
     \log\left(\dfrac{\abs{\acc}}{\acczero+\abs{\acc}}\right).
  \end{EQ}
  Now bringing $\dfrac{c_0}{2c_1}\tvzero$ to the r.h.s. completes the proof.
  \qed
\end{proof}
\begin{lemma}\label{lem:acceleration}
  The acceleration $v'(t)$ of Equation~\eqref{eq:riccati}
  is identically $0$ or is of constant sign in the interval where $v(t)\geq 0$,
  the corresponding signs are collected in Table~\ref{tab:interval}.
  Moreover, the space  $s(t)$ is monotone increasing  where $v(t)\geq 0$
  and thus the inverse function $t(\hat{s})$, i.e. the solution of the problem
  $s(t,\vzero) = \hat{s}$, is well defined.
\end{lemma}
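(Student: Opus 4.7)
The plan is to exploit the fact that~\eqref{eq:riccati} is autonomous on each arc of constant control, so that $v'(t)=f(v(t))$ with $f(v):=\acc-c_0v-c_1v^2$ depending only on $v$. By uniqueness for the ODE, a nonconstant trajectory cannot cross an equilibrium of $f$, so the sign of $v'$ is determined on the whole interval of existence as soon as the position of $\vzero$ relative to the real roots of $f$ is known.

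First I would locate the real roots of $f$ in each of the five cases. For $w\geq 0$ the roots are $\vinf=\beta/c_1$ and $-\alpha/c_1$; when $\acc\geq 0$ only $\vinf$ is nonnegative, when $\acc<0$ with $w$ real both roots are nonpositive because $w\leq c_0$, and when $w$ is complex $f$ has no real roots. Reading off signs: case (a) is the equilibrium itself, so $v'\equiv 0$; in case (b) one has $0\leq\vzero<\vinf$ with $f>0$ on $[0,\vinf)$, and the trajectory is trapped in this interval, so $v'>0$; in case (c) one has $\vzero>\vinf$ and $f<0$ on $(\vinf,\infty)$, so $v'<0$; in cases (d) and (e) one has $\acc<0$ with no positive root of $f$ and $f(0)=\acc<0$, hence $f(v)<0$ for every $v\geq 0$ and $v'<0$. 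These match the entries of Table~\ref{tab:interval}.

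For the monotonicity of $s$, note that $s'(t)=v(t)\geq 0$ on the interval of existence, and by Lemma~\ref{lem:velocity} the velocity vanishes at most at a single endpoint, namely $\tzero$ in cases (b) and (d) or $\tvzero$ in case (e); on the interior $v(t)>0$. Hence for any $t_1<t_2$ in the interval, $s(t_2)-s(t_1)=\int_{t_1}^{t_2}v(\zeta)\,\mathrm{d}\zeta>0$, so $s$ is strictly monotone increasing and therefore admits a well-defined inverse $t(\hat s)$, solving $s(t,\vzero)=\hat s$. The only delicate step is the case-by-case sign inspection of the quadratic $f$, but once the roots have been located there is no genuine obstacle, since the trajectory is confined to one component of $\{v\geq 0\}\setminus\{\text{roots of }f\}$ by the uniqueness argument.\qed
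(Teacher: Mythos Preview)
Your proof is correct and rests on the same core observation as the paper's: for an autonomous scalar ODE $v'=f(v)$, if $v'(\tau)=0$ at some instant then $v(\tau)$ is an equilibrium of $f$ and the solution must be the constant one. The paper's argument is a single sentence: assume $v'(\tau)=0$, shift time so that $\tau=0$, observe that the initial condition then satisfies $\acc=(c_0+c_1\vzero)\vzero$, and conclude directly from the explicit formula~\eqref{eq:v} (where the numerator $\acczero-\acc$ vanishes) that $v$ is constant. You reach the same conclusion via the abstract Picard--Lindel\"of uniqueness theorem rather than the closed-form solution, which is slightly more self-contained since it does not rely on Lemma~\ref{lem:vel}.

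Where your write-up genuinely adds value is in the explicit case-by-case localisation of the roots of $f(v)=\acc-c_0v-c_1v^2$ and the resulting sign determination, which the paper leaves entirely to the reader and to Table~\ref{tab:interval}. You also spell out the strict monotonicity of $s$ (zero of $v$ only at an endpoint, hence $\int_{t_1}^{t_2}v>0$), whereas the paper's proof does not mention this part of the statement at all. One small caveat: your root analysis tacitly assumes $c_1>0$; the degenerate case $c_1=0$ (where $f$ is affine) is even simpler but strictly speaking not covered by ``the roots are $\beta/c_1$ and $-\alpha/c_1$''. This is easily patched and does not affect the argument.
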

\begin{proof}
  From equation~\eqref{eq:riccati} with constant acceleration $\acc$, suppose that for a certain $t=\tau$  the velocity is $v'(\tau)=0$. If we consider the shifted function 
  $\tilde{v}'(t)=v'(t-\tau)$, we have
  $0=\acc-(c_0-c_1\tilde{v}(0))\tilde{v}(0)$
  and from equation~\eqref{eq:v} of Lemma~\ref{lem:vel}
  the solution~$\tilde{v}(t)$ is constant.\qed
\end{proof}
\begin{corollary}\label{cor:3.6}
  The function $\tilde{v}(s)=v(t(s))$
  satisfies the following ODE:
  \begin{EQ}
    \tilde{v}'(s)=\dfrac{\acc(s)}{\tilde{v}(s)}-c_0-c_1\tilde{v}(s), \quad \tilde{v}(0)=\vzero\geq 0,
  \end{EQ}
  that is, the ODE~\eqref{eq:riccati} in the new independent variable $s$. This change of variable will be used in next sections.
\end{corollary}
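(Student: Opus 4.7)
The plan is to perform a direct change of variables using the chain rule, relying on Lemma \ref{lem:acceleration} to guarantee that the inverse function $t(s)$ is well-defined and differentiable on the relevant interval.

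First, I would invoke Lemma \ref{lem:acceleration}, which states that $s(t)$ is monotone increasing wherever $v(t) \geq 0$. Since $s'(t) = v(t)$ is continuous and (strictly) positive on the interior of this interval, the inverse function theorem provides a differentiable inverse $t(s)$ with $t'(s) = 1/s'(t(s)) = 1/v(t(s)) = 1/\tilde{v}(s)$. This is the crucial identity for the change of variable.

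Next, I would apply the chain rule to $\tilde{v}(s) = v(t(s))$ to obtain $\tilde{v}'(s) = v'(t(s)) \, t'(s) = v'(t(s))/\tilde{v}(s)$. Substituting the Riccati ODE \eqref{eq:riccati}, namely $v'(t) = \acc(t) - c_0 v(t) - c_1 v(t)^2$, evaluated at $t = t(s)$ gives
\begin{EQ}
  \tilde{v}'(s) = \frac{\acc(t(s)) - c_0 \tilde{v}(s) - c_1 \tilde{v}(s)^2}{\tilde{v}(s)} = \frac{\acc(s)}{\tilde{v}(s)} - c_0 - c_1 \tilde{v}(s),
\end{EQ}
after reinterpreting $\acc$ as a function of $s$ via $\acc(s) \DEF \acc(t(s))$. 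The initial condition follows because $s(0) = 0$ implies $t(0) = 0$, hence $\tilde{v}(0) = v(0) = \vzero$.

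There is no real obstacle here, since the Bang-Bang structure means $\acc$ is piecewise constant and the singular point $\tilde{v} = 0$ is excluded on the interior by the positivity of $v(t)$ coming from Lemma \ref{lem:acceleration}. The only mild subtlety worth mentioning explicitly is the behavior at endpoints where $v$ may vanish (cases (d) and (e) of Table \ref{tab:interval}): there the new ODE has a singularity $\acc/\tilde{v}$, which is a natural consequence of $s$ being stationary at such points and does not affect the validity of the equation on the open interval where $\tilde{v}(s) > 0$.
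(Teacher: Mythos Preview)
Your proof is correct and is precisely the argument the paper has in mind: the paper states this result as an immediate corollary of Lemma~\ref{lem:acceleration} without giving an explicit proof, and the chain-rule computation you wrote out is the obvious (and only reasonable) way to fill in the details.
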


\section{Stable Computation of the Analytic Solutions}\label{sec:stable}
Expressions~\eqref{eq:v} and \eqref{eq:s}
may be unstable or numerically inaccurate for $c_0\approx 0$, $c_1\approx 0$ or $w\approx 0$
or for $t\approx \tinf$ or $t\approx \tzero$ .
Numerically accurate reformulation for expressions~\eqref{eq:v} and \eqref{eq:s} are derived in this section.
We define some auxiliary functions:
\begin{EQ}[rclrcl]\label{eq:aux_functions}
  \Lfun(t,c) &\DEF& c^{-1}\log(1-ct),\quad&
  \Efun(t,w) &\DEF& w^{-1}\big(1-\mathrm{e}^{wt}\big),\\
  \Sfun(x,w) &\DEF& w^{-1}\sin(wx),\quad&
  \Afun(x,w) &\DEF& w^{-1}\arctan(wx),
\end{EQ}
and
\begin{EQ}[rcl]\label{eq:Ffun}
  \Ffun(t,w,c_0)
  &\DEF& \mathrm{e}^{tc_0/2}\Gfun(t,w,c_0),\\
  \Gfun(t,w,c_0)
  &\DEF&
  \dfrac{1}{w}\Big[
  \Efun\Big(-t,\dfrac{w+c_0}{2}\Big)+\Efun\Big(t,\dfrac{w-c_0}{2}\Big)
  \Big],
\end{EQ}
\begin{lemma}\label{lem:4.1}
The functions \eqref{eq:aux_functions} and \eqref{eq:Ffun} are smooth in their definition domain and
have the following converging Taylor expansions:
\begin{EQ}[rclrcl]\label{eq:Ffun:serie1}
   \Lfun(t,c) &= & -t\sum_{n=0}^\infty \dfrac{(ct)^n}{n+1},
   \; &
   \Efun(t,w) &= & -t\sum_{n=0}^\infty \dfrac{(wt)^{n}}{(n+1)!},
   \\
   \Sfun(x,w) &=& x\sum_{n=0}^\infty\dfrac{(-(wx^2))^n}{(2n+1)!},
   \; &
   \Afun(x,w) &=& x\sum_{n=0}^\infty\dfrac{(-(wx)^2)^n}{2n+1},
   \\
   \Ffun(t,w,c_0) &=&
   -\sum_{n=1}^\infty\dfrac{f_n t^{2n}}{2^{2n}(2n)!}
   \left[4+\dfrac{2c_0t}{2n+1}\right],\; &
   f_n &=& \dfrac{w^{2n}-c_0^{2n}}{w^2-c_0^2},
   \\
   \Gfun(t,w,c_0) &=& -\sum_{n=1}^\infty f_n\mathrm{e}^{h_n(t)} \left[ 4+\dfrac{2c_0t}{2n+1}\right],\; &
   h_n(t) &=& \sum_{j=1}^{2n}\log\dfrac{\abs{t}}{2j}-\dfrac{tc_0}{2},
\end{EQ}
and the term $f_n$ is computed by the following recurrence:
\begin{EQ}
  \textrm{Initial value:}\quad
  \begin{cases} g_1 = 1, \\ f_1 = 1,\end{cases}
  \qquad
  \textrm{Recurrence:}\quad
  \begin{cases} g_{n+1} = g_{n}c_0^{2},\\f_{n+1} = f_{n}w^2 + g_{n+1}.\end{cases}
\end{EQ} 

\end{lemma}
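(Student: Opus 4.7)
The plan is to verify each series by substituting the Maclaurin expansions of the relevant elementary functions into the closed-form definitions in~\eqref{eq:aux_functions} and~\eqref{eq:Ffun}. Smoothness on the definition domain then follows because the resulting series converge; in particular, the apparent singularities (at $c=0$ for $\Lfun$; at $w=0$ for $\Efun,\Sfun,\Afun$; at $w=\pm c_0$ for $\Ffun,\Gfun$) are removable, since each series remains well defined there.

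For the four simpler functions, the work is essentially bookkeeping. I substitute $\log(1-ct) = -\sum_{n\geq 1}(ct)^n/n$ and factor out $-t$ for $\Lfun$; expand $e^{wt}$ by its exponential series and shift the summation index for $\Efun$; and read off the expansions of $\Sfun$ and $\Afun$ from the standard Maclaurin series of $\sin$ and $\arctan$ after pulling out the factor $w^{-1}$. No obstacle arises beyond checking the index shifts and the radii of convergence.

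The main obstacle is $\Ffun$. My plan is first to eliminate $\Efun$ from its definition by combining $\Efun(-t,(w+c_0)/2)+\Efun(t,(w-c_0)/2)$ into a single expression and multiplying by $e^{tc_0/2}$. Elementary algebra reduces $\Ffun$ to the closed form
\begin{EQ}
  \Ffun(t,w,c_0) = \dfrac{4}{w^2-c_0^2}\left[e^{tc_0/2} - \cosh(tw/2) - \dfrac{c_0}{w}\sinh(tw/2)\right].
\end{EQ}
I then expand each of the three transcendental factors as a Maclaurin series in $t$ and collect terms by parity. The $t^{2k}$ coefficient involves $(c_0^{2k}-w^{2k})/(2^{2k}(2k)!)$ and the $t^{2k+1}$ coefficient involves $c_0(c_0^{2k}-w^{2k})/(2^{2k+1}(2k+1)!)$; the $k=0$ terms vanish identically, and for $k\geq 1$ the factorisation $w^{2k}-c_0^{2k}=(w^2-c_0^2)f_k$ cancels the prefactor $1/(w^2-c_0^2)$, leaving the advertised series. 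This cancellation is the crux of the argument and makes it transparent that $w=\pm c_0$ is only a removable singularity.

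The recurrence for $f_n$ follows by induction from the identity $w^{2n+2}-c_0^{2n+2}=w^2(w^{2n}-c_0^{2n})+c_0^{2n}(w^2-c_0^2)$, which yields $f_{n+1}=w^2 f_n+g_{n+1}$ with $g_{n+1}=c_0^2 g_n$ and $g_1=1$. Finally, the series for $\Gfun$ is obtained from $\Gfun(t,w,c_0)=e^{-tc_0/2}\Ffun(t,w,c_0)$ by rewriting each monomial $t^{2n}/(2^{2n}(2n)!)$ as $\exp\!\bigl(\sum_{j=1}^{2n}\log(|t|/(2j))\bigr)$ and absorbing the $e^{-tc_0/2}$ prefactor into the exponent, producing exactly the $e^{h_n(t)}$ appearing in the statement.
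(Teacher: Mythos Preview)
Your proof is correct and follows essentially the same route as the paper's. The paper also treats $\Lfun$, $\Efun$, $\Sfun$, $\Afun$ as straightforward, and for $\Ffun$ it multiplies through by $(w^2-c_0^2)/2$ to obtain the closed form $2\mathrm{e}^{tc_0/2}-\mathrm{e}^{-tw/2}-\mathrm{e}^{tw/2}+(c_0/w)(\mathrm{e}^{-tw/2}-\mathrm{e}^{tw/2})$, which is exactly your $\cosh$/$\sinh$ expression written out in exponentials; it then expands term by term, separates even and odd powers, and divides through by $(w^2-c_0^2)$ using the definition of $f_n$, just as you do. The recurrence and the passage from $\Ffun$ to $\Gfun$ via $\mathrm{e}^{-tc_0/2}$ and the logarithmic rewriting of $t^{2n}/(2^{2n}(2n)!)$ are also handled identically.
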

\begin{proof}
The Taylor series for $\Lfun$, $\Efun$, $\sinc$\footnote{Notice that we use the \emph{unnormalised sinc function}, defined, for $x\neq 0$ and extended for $x=0$ with its limit.}, $\Afun$ are straightforward. %
Some effort is necessary to derive the expansion for $\Ffun(t,w,c_0)$ and $ \Gfun(t,w,c_0)$.
To compute~\eqref{eq:Ffun}, we multiply it by $(w^2-c_0^2)/2$
so that we obtain:
\begin{EQ}[rcl]
   \dfrac{w^2-c_0^2}{2}
   \Ffun(t,w,c_0)
   &=&
   2\mathrm{e}^{tc_0/2}-\mathrm{e}^{-tw/2}-\mathrm{e}^{tw/2}+
   \dfrac{c_0}{w}\left(\mathrm{e}^{-tw/2}-\mathrm{e}^{tw/2}\right)
   \\
   &=&
   \sum_{n=1}^\infty
   \dfrac{t^n}{2^n n!}
   \left(
   2c_0^n-((-w)^n+w^n)+\dfrac{c_0}{w}\left((-w)^n-w^n\right)
   \right)
   \\
   &=&
   \sum_{n=3,5,7,\ldots}^\infty
   \dfrac{t^n\left(2c_0^n-2c_0w^{n-1}\right)}{2^n n!}
   +
   \sum_{n=2,4,5,\ldots}^\infty
   \dfrac{t^n\left(2c_0^n-2w^n\right)}{2^n n!}
   \\
    &=&
   \sum_{n=1}^\infty
   \dfrac{t^{2n+1}c_0\left(c_0^{2n}-w^{2n}\right)}{2^{2n}(2n+1)!}
   +
   \sum_{n=1}^\infty
   \dfrac{t^{2n}\left(c_0^{2n}-w^{2n}\right)}{2^{2n-1}(2n)!}
   \\
    &=&
   \sum_{n=1}^\infty
   \dfrac{t^{2n}}{2^{2n}(2n)!}
   \left[
   2+\dfrac{c_0t}{2n+1}
   \right]
   \left(
   c_0^{2n}-w^{2n}
   \right).
\end{EQ}
If we define the general term $f_n \DEF (w^{2n}-c_0^{2n})/(w^2-c_0^2)$
then \eqref{eq:Ffun:serie1} is readily obtained.
Those functions are smooth and numerically stable even when the arguments $w$ and $c_0$ 
approach zero. Notice that an efficient computation of the series~\eqref{eq:Ffun}
uses the recurrence for $f_n$.
The use of the recurrence is mandatory to avoid cancellation errors or numerical overflows, hence the stable and efficient algorithm to evaluate~\eqref{eq:Ffun:serie1} is summarised in the next equation:
\begin{EQ}
   \Gfun(t,w,c_0)
   =
   -\sum_{n=1}^\infty
   \mathrm{e}^{h_n(t)}
   \left[
   4+\dfrac{2c_0t}{2n+1}
   \right],
   \qquad
   h_n(t) = 2n\log\dfrac{\abs{t}}{2}-\dfrac{tc_0}{2}-\log((2n)!),
\end{EQ}
where
\begin{EQ}
  h_n(t) = 2n\log\dfrac{\abs{t}}{2}-\dfrac{tc_0}{2}-\sum_{j=1}^{2n}\log j
      = \sum_{j=1}^{2n}\left(\log\dfrac{\abs{t}}{2}-\log j\right)-\dfrac{tc_0}{2}
      = \sum_{j=1}^{2n}\log\dfrac{\abs{t}}{2j}-\dfrac{tc_0}{2}.
\end{EQ}
and the remaining part follows easily.\qed
\end{proof}

The computation of the special values  $\tzero$, $\tinf$,
$\tvzero$, $\vinf$ of Table~\ref{tab:interval} and corresponding values of $s(t,v)$ is critical and must be accurate, otherwise complex arguments or logarithms of negative values are very likely. Using~Lemma~\ref{lem:4.1}, the stable computation of the ranges in \eqref{eq:t0:def}
is written in terms of the smooth functions introduced in
\eqref{eq:Ffun:serie1}: 
\begin{EQ}[rclrcll]\label{eq:t0tv0:def}
  \tzero &=& \Lfun\Big(\dfrac{\vzero}{\acc+\vzero\beta},w\Big),
  \quad&
  \tinf &=& \Lfun\Big(\dfrac{1}{\gamma},w\Big),\quad &
  w\geq 0
  \\
  \tvzero &=& 2\Afun\Big(\dfrac{\vzero}{\vzero c_0+2\abs{\acc}},\abs{w}\Big),
  \quad&
  \tvinf&=& \tvzero-2\Afun\Big(\dfrac{1}{c_0},\abs{w}\Big),\quad &
  \textrm{$w$ complex.}
\end{EQ}
An algorithmic version of the computation of the necessary constants for the semi-analytic
solution of the OCP \eqref{OCP} is given in Algorithm \ref{algo:setup}.
\subsection{Stable speed computation}
With the previously introduced formulas, the expressions of Lemma \ref{lem:vel} 
are rewritten in the numerically stable version in the next Proposition.

\begin{proposition}
  The speed $v(t)$ is numerically stable for parameters 
  $w\approx 0$ or $t\approx \tzero$ or $t\approx\tinf$ if computed as follows (An
  algorithmic version for computing $v(t)$ is presented in Algorithm \ref{algo:velocity}).
  For $w$ real, the velocity is rewritten as:
  \begin{EQ}\label{eq:lem6:1}
     v(t) = \dfrac{p(t)}{q(t)}+
     \begin{cases}
       0      & \textrm{if $\abs{t-\tzero}\leq \epsilon$}, \\
       \vzero & \textrm{otherwise},
     \end{cases}
     \;
     \left\{\begin{array}{r@{~}c@{~}l}
     p(t) &=& \begin{cases} 
         (\alpha \vzero-\acc)\Efun(t-\tzero,w) & \textrm{if $\abs{t-\tzero}\leq \epsilon$}, \\
         (\acczero-\acc)\Efun(t,w)             & \textrm{$t>0$}, \\
         (\acc-\acczero)\Efun(-t,w)            & \textrm{$t\leq 0$},
         \end{cases}
     \\[2em]
     q(t) &=& \begin{cases}
         (w-\gamma)\Efun(t-\tinf,w) & \textrm{if $\abs{t-\tinf}\leq\epsilon$}, \\
         1-\gamma\Efun(t,w)         & \textrm{$t>0$}, \\
         1+(\gamma-w)\Efun(-t,w)    & \textrm{$t\leq 0$}.
         \end{cases}
     \end{array}\right.
  \end{EQ}
  When $w$ is complex, a stable computation is:
  \begin{EQ}\label{eq:lem6:2}
     v(t)=
     \dfrac{\vzero\cos(\frac{1}{2}t\abs{w})-(c_0\vzero+2\abs{\acc})\Sfun(\frac{1}{2}t,\abs{w})}
     {\cos(\frac{1}{2}t\abs{w})+(2c_1\vzero+c_0)\Sfun(\frac{1}{2}t,\abs{w})}
  \end{EQ}
\end{proposition}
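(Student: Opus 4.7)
The plan is to verify each rewritten expression is algebraically identical to the formula from Lemma~\ref{lem:vel}, so that the stable form only differs in how the arithmetic is arranged, not in what is computed. Numerical stability is then immediate since every surviving factor is either a smooth auxiliary function from Lemma~\ref{lem:4.1} or a non-vanishing quantity.

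For the real case \eqref{eq:lem6:1}, I would first combine the two terms of $v(t)=\vzero+(\acczero-\acc)\Efun(t,w)/[1-\gamma\Efun(t,w)]$ over the common denominator $q(t)=1-\gamma\Efun(t,w)$. A one-line calculation using \eqref{eq:constants} gives $\acczero-\vzero\gamma=-\vzero\beta$, so the combined numerator equals $\vzero-(\acc+\vzero\beta)\Efun(t,w)$. To obtain the $|t-\tzero|\le\epsilon$ branch, I would use the fact that $v(\tzero)=0$ forces $\Efun(\tzero,w)=\vzero/(\acc+\vzero\beta)$, so the numerator rewrites as $(\acc+\vzero\beta)[\Efun(\tzero,w)-\Efun(t,w)]=-(\acc+\vzero\beta)\,e^{w\tzero}\Efun(t-\tzero,w)$. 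Finally, $e^{w\tzero}=(\acc-\vzero\alpha)/(\acc+\vzero\beta)$ (read off from \eqref{eq:t0:def} via $\beta-w=-\alpha$) collapses the prefactor to $\alpha\vzero-\acc$, which both matches the first branch of $p(t)$ and, crucially, justifies dropping the additive $\vzero$ on that branch. An entirely analogous manipulation on $q$, using $\Efun(\tinf,w)=1/\gamma$ and $\gamma e^{w\tinf}=\gamma-w$, turns $1-\gamma\Efun(t,w)$ into $(w-\gamma)\Efun(t-\tinf,w)$. The $t>0$ versus $t\le0$ branches are linked by the identity $\Efun(t,w)=-e^{wt}\Efun(-t,w)$; the common factor $e^{wt}$ appears in both numerator and denominator and cancels, so the two versions represent the same $v(t)$ while one or the other keeps the exponent inside $\Efun$ of a negative argument.

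For the complex case \eqref{eq:lem6:2}, I would expand $\sin(\theta_0-\tfrac{1}{2}t\abs{w})$ and $\sin(\theta_1+\tfrac{1}{2}t\abs{w})$ with the sum-angle formulas. The definitions in \eqref{eq:constants} give $(\sin\theta_0,\cos\theta_0)\propto(\vzero\abs{w},\,\vzero c_0+2\abs{\acc})$ and $(\sin\theta_1,\cos\theta_1)\propto(\abs{w},\,2c_1\vzero+c_0)$. Substituting, pulling the constant $\abs{w}$ into the $\sin$-over-$|w|$ combinations to expose $\Sfun(t/2,\abs{w})$, and collecting the normalising constants recovers the stated quotient up to a scalar equal to the ratio of the two normalisations times $\sqrt{\abs{\acc}/c_1}$.

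The main obstacle will be the bookkeeping in this last step: showing that the normalisation ratio is exactly $\sqrt{c_1/\abs{\acc}}$ so as to cancel the $\sqrt{\abs{\acc}/c_1}$ coming from Lemma~\ref{lem:vel}. This needs $(\vzero\abs{w})^2+(\vzero c_0+2\abs{\acc})^2$ and $\abs{w}^2+(2c_1\vzero+c_0)^2$ to be simplified using the case-(e) identity $\abs{w}^2=4\abs{\acc}c_1-c_0^2$; both collapse to $4\abs{\acc}(\acczero+\abs{\acc})$ and $4c_1(\acczero+\abs{\acc})$ respectively, and the quotient of the two square roots is precisely $\sqrt{c_1/\abs{\acc}}$, closing the identity. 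The real-case reductions are by comparison routine once the three core identities $\Efun(\tzero,w)=\vzero/(\acc+\vzero\beta)$, $\Efun(\tinf,w)=1/\gamma$, and the corresponding exponential values are in hand.
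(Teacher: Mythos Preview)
Your proposal is correct and follows essentially the same route as the paper: for real $w$ you combine over the common denominator and rewrite numerator and denominator via the defining identities $\Efun(\tzero,w)=\vzero/(\acc+\vzero\beta)$ and $\Efun(\tinf,w)=1/\gamma$ (the paper threads the same computation through an intermediate $\Efun(-\tzero,w)$ factorisation), and the $t>0$/$t\le0$ split is handled by the same $\Efun(t,w)=-e^{wt}\Efun(-t,w)$ identity that makes the common $e^{wt}$ cancel. For the complex case the paper merely asserts that ``standard expansion of the trigonometric functions'' gives \eqref{eq:lem6:2}, so your explicit angle-sum expansion together with the normalisation check $N_0^2=4\abs{\acc}(\acczero+\abs{\acc})$, $N_1^2=4c_1(\acczero+\abs{\acc})$ is exactly that computation carried out in full rather than a different argument.
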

\begin{proof}
\emph{Case $w\geq 0$ real}. Notice that from~\eqref{eq:v} it is possible to write for $v$,
\begin{EQ}
  \vzero + \dfrac{(\acczero-\acc)\Efun(t,w)}{1-\gamma\Efun(t,w)}
  = \dfrac{\vzero-(\acc+\vzero\beta)\Efun(t,w)}{1-\gamma\Efun(t,w)}
  = \vzero + \dfrac{(\acc-\acczero)\Efun(-t,w)}{1+(\gamma-w)\Efun(-t,w)},
\end{EQ}
therefore for the numerator, we have:
\begin{EQ}[rcl]
   \vzero-(\acc+\vzero\beta)\Efun(t,w) 
   &=&
   \vzero-(\acc+\vzero\beta)
   w^{-1}\big(\mathrm{e}^{-\tzero w}-\mathrm{e}^{(t-\tzero)w}\big)
   \mathrm{e}^{\tzero w} \\
   &=&
   \vzero-(\acc+\vzero\beta)
   \left(-\Efun(-\tzero,w)+\Efun(t-\tzero,w)\right)\mathrm{e}^{\tzero w}\\
   &=&
   \vzero-(\acc+\vzero\beta)
   \Big(\Efun(\tzero,w)+\Efun(t-\tzero,w)\Big(1-\dfrac{w}{\gamma}\dfrac{\vzero}{\vinf}\Big)\Big)
   \\
   &=&
   \vzero-(\acc+\vzero\beta)
   \Big(\dfrac{1}{\gamma}\dfrac{\vzero}{\vinf}+\Efun(t-\tzero,w)\Big(1-\dfrac{w}{\gamma}\dfrac{\vzero}{\vinf}\Big)\Big)
   \\
   &=&
   \Efun(t-\tzero,w)\left(\acc+\vzero\beta-w\vzero\right)\qquad [\acc+\vzero\beta=\gamma\vinf]
   \\
   &=&
   \Efun(t-\tzero,w)\left(\acc-\alpha\vzero\right)
\end{EQ} 
And for the denominator,
\begin{EQ}[rcl]
   1-\gamma\Efun(t,w) 
   &=&
   1-\gamma w^{-1}\big(\mathrm{e}^{-\tinf w}-\mathrm{e}^{(t-\tinf)w}\big)\mathrm{e}^{\tinf w}
   \\
   &=&
   1-\gamma
   \left(-\Efun(-\tinf,w)+\Efun(t-\tinf,w)\right)\mathrm{e}^{\tinf w}\\
   &=&
   1-\gamma
   \big(\Efun(\tinf,w)+\Efun(t-\tinf,w)\big(1-w\gamma^{-1}\big)\big)
   \\
   &=&
   1-\gamma
   \big(\gamma^{-1}+\Efun(t-\tinf,w)\big(1-w\gamma^{-1}\big)\big)
   \\
   &=&
   \Efun(t-\tinf,w)\left(w-\gamma\right).
\end{EQ}

\emph{Case $w$ complex}. The standard expansion of the trigonometric functions in \eqref{eq:v} yields immediately
\eqref{eq:lem6:2}.\qed
\end{proof}

\subsection{Stable space computation}
We restate here the numerically stable formulas of Lemma \ref{lem:space}.
\begin{lemma}
  The space $s(t)$ of formula~\eqref{eq:s} is numerically stable for parameters 
  $w\approx 0$, or $t\approx \tzero$ or $t\approx\tinf$, if computed as follows.
  For $w$  real, $s(t)$ can be rewritten as:
  \begin{EQ}\label{eq:lem6:1:b}
    s(t) = \vinf t+
    \Lfun\left((\vinf-\vzero)\Efun(-t,w),c_1\right)
    =
    \Lfun\left(
    \acc\Gfun(t,w,c_0)
    -\vzero\Efun(-t,w)\mathrm{e}^{\beta t},c_1
    \right),
  \end{EQ}
  where we use the second equation when $\vinf\gg\vzero$.
  For $w$ complex and $c_1\gg0$ a stable computation is:
  \begin{EQ}\label{eq:lem6:2:b}
     s(t) =
     \dfrac{1}{c_1}
     \log\Big(
      (2c_1\vzero+c_0)\Sfun\Big(\frac{t}{2},\abs{w}\Big)+\cos\frac{t\abs{w}}{2}
     \Big)
     -\dfrac{c_0 t}{2c_1},
  \end{EQ}
  the case $c_1\approx 0$ is considered in the next lemma.
\end{lemma}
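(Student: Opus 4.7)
The plan is to verify each of the two displayed identities by direct algebraic manipulation starting from the closed-form expressions \eqref{eq:s} of Lemma~\ref{lem:space}, and then invoke Lemma~\ref{lem:4.1}, which shows that $\Lfun$, $\Efun$, $\Sfun$, and $\Gfun$ extend analytically to the limiting values of their arguments through convergent Taylor series. This analyticity is precisely what gives numerical stability when $c_1\approx 0$, $w\approx 0$, or when $t$ approaches $\tzero$ or $\tinf$, since the apparent singularities of the logarithm cancel against those of the inner argument.

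For the real $w$ case, the first equality in \eqref{eq:lem6:1:b} is immediate from the definition $\Lfun(x,c_1)=c_1^{-1}\log(1-c_1 x)$ applied to the upper branch of \eqref{eq:s}. For the second equality, the idea is to absorb the linear term $\vinf t$ inside the logarithm via $c_1\vinf=\beta$, so $e^{c_1\vinf t}=e^{\beta t}$. Equating the arguments of the two $\Lfun$ representations, this reduces to verifying
\begin{EQ}
e^{\beta t}\bigl(1-c_1(\vinf-\vzero)\Efun(-t,w)\bigr)=1-c_1\acc\,\Gfun(t,w,c_0)+c_1\vzero\,e^{\beta t}\Efun(-t,w).
\end{EQ}
After cancelling $c_1\vzero\,e^{\beta t}\Efun(-t,w)$ from both sides and using $c_1\vinf=\beta$, what remains is $e^{\beta t}-\beta e^{\beta t}\Efun(-t,w)=1-c_1\acc\,\Gfun(t,w,c_0)$. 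Expanding $\Efun(-t,w)=w^{-1}(1-e^{-wt})$ and, from \eqref{eq:Ffun}, $\Gfun(t,w,c_0)=w^{-1}[\Efun(-t,\alpha)+\Efun(t,\beta)]$, and using the Vieta-type identities $\alpha+\beta=w$ and $\alpha\beta=c_1\acc$ (already exploited in Lemma~\ref{lem:space}), both sides collapse to $\tfrac{\alpha}{w}e^{\beta t}+\tfrac{\beta}{w}e^{-\alpha t}$, completing the derivation.

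For the complex $w$ case, I apply the angle-addition formula $\sin(\theta_1+\tfrac{1}{2}t\abs{w})=\sin\theta_1\cos(\tfrac{1}{2}t\abs{w})+\cos\theta_1\sin(\tfrac{1}{2}t\abs{w})$ inside the lower branch of \eqref{eq:s}. The ratio $\sin(\theta_1+\tfrac{1}{2}t\abs{w})/\sin\theta_1$ becomes $\cos(\tfrac{1}{2}t\abs{w})+\cot\theta_1\sin(\tfrac{1}{2}t\abs{w})$. Using the definition $\theta_1=\arctan(\abs{w}/(2c_1\vzero+c_0))$ from \eqref{eq:constants} gives $\cot\theta_1=(2c_1\vzero+c_0)/\abs{w}$, so $\cot\theta_1\sin(\tfrac{1}{2}t\abs{w})=(2c_1\vzero+c_0)\Sfun(\tfrac{t}{2},\abs{w})$. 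Splitting the resulting logarithm against the affine term $-\tfrac{c_0 t}{2c_1}$ yields \eqref{eq:lem6:2:b}.

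The main obstacle is the algebraic bookkeeping in the second representation of \eqref{eq:lem6:1:b}: the exponential $e^{\beta t}$ and the factor $\Efun(-t,w)$ have to be expanded carefully so that the atoms $e^{\beta t}$ and $e^{-\alpha t}$ emerge and match the $\Efun(-t,\alpha)$ and $\Efun(t,\beta)$ pieces hidden inside $\Gfun$. Once this is done, the numerical benefit of the $\Gfun$-based form follows directly from Lemma~\ref{lem:4.1}: the series expansions avoid the catastrophic cancellation that plagues the explicit difference $\vinf-\vzero$ when $\vinf$ is large compared to $\vzero$, and more generally whenever $c_1\to 0$ or $w\to 0$.
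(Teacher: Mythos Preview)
Your proposal is correct and follows essentially the same approach as the paper. Both proofs are direct algebraic manipulations of~\eqref{eq:s} using the identities $\alpha+\beta=w$, $\alpha\beta=c_1\acc$, and $c_1\vinf=\beta$; the only stylistic difference is that the paper derives the second form of~\eqref{eq:lem6:1:b} by a forward chain of equalities (absorbing $e^{\beta t}$ into the logarithm and rewriting $e^{\beta t}=1-\beta\Efun(t,\beta)$ step by step), whereas you set up the target identity between the two $\Lfun$-arguments and verify it by reducing both sides to the common atom $\tfrac{\alpha}{w}e^{\beta t}+\tfrac{\beta}{w}e^{-\alpha t}$. The complex case is handled identically in both, via angle addition and $\cot\theta_1=(2c_1\vzero+c_0)/\abs{w}$.
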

\begin{proof}
\emph{Case $w\geq 0$ real}. In this case equation \eqref{eq:v} becomes, after some manipulations:
\begin{EQ}[rcl]
   s(t)
   &=&
   \vinf t + c_1^{-1}\log\left(1-c_1(\vinf-\vzero)\Efun(-t,w)\right)
   \\
   &=&
   c_1^{-1}\beta t + c_1^{-1}\log\left(1-c_1(\vinf-\vzero)\Efun(-t,w)\right), \\
   &=&
   c_1^{-1}
   \log\left( \mathrm{e}^{\beta t}-c_1(\vinf-\vzero)\Efun(-t,w)\mathrm{e}^{\beta t}\right)
   \\
   &=&
   c_1^{-1}
   \log\left( 1-\beta\Efun(t,\beta)-c_1(\vinf-\vzero)\Efun(-t,w)\mathrm{e}^{\beta t}\right),\\
   &=&
   c_1^{-1}
   \log\left(
   1-c_1\vinf\left(
   \Efun(t,\beta)+
   w^{-1}\big(\mathrm{e}^{\beta t}-\mathrm{e}^{-\alpha t}\big)\right)
   +c_1\vzero
   \Efun(-t,w)\mathrm{e}^{\beta t}
   \right),\\
   &=&
   c_1^{-1}
   \log\left(
   1-c_1\acc
   w^{-1}\big(\Efun(t,\beta)+\Efun(-t,\alpha)\big)
   +c_1\vzero
   \Efun(-t,w)\mathrm{e}^{\beta t}
   \right).\\
\end{EQ}
\emph{Case $w$ complex}. Follows easily using~\eqref{eq:s} and the angle definitions in~\eqref{eq:v}.\qed
\end{proof}

%
\begin{lemma}
  Computation of $s(t)$ when $w$ is complex and $c_1\approx 0$:
  \begin{EQ}\label{eq:s:stable}
     s(t) =\begin{cases}
     (\acczero+\abs{\acc})t^2 Q(t\ell_1,\cos\theta_1)+\vzero t
     & \textrm{$\abs{t\ell_1} \leq 0.001$} \\
     \textrm{use \eqref{eq:lem6:2:b}}
     & \textrm{$\abs{t\ell_1}>0.001$}
     \end{cases}
  \end{EQ}
  where
  \begin{EQ}\label{eq:def:ell1}
     \ell_1       \DEF \sqrt{c_1(\acczero+\abs{\acc})}, \qquad
     \cos\theta_1 \DEF \dfrac{2c_1\vzero+c_0}{2\ell_1}, \qquad
     \sin\theta_1 \DEF \dfrac{\abs{w}}{2\ell_1},
  \end{EQ}
  and $Q(\tau,c)$ can be approximated (when $t\ell_1\approx 0$) with 
  \begin{EQ}
   Q(\tau,c)=-\dfrac{1}{2}
    +\dfrac{c\tau}{3}
    -\dfrac{2c^2+1}{12}\tau^2
    +\dfrac{c^2+2}{15}c\tau^3
    -\dfrac{2c^4+11c^2+2}{90}\tau^4
    +\dfrac{2c^4+26c^2+17}{315}c\tau^5
    -\varepsilon(\tau,c).
  \end{EQ}
  The remainder is $\abs{\varepsilon(\tau,c)}\leq 10^{-18}$.
\end{lemma}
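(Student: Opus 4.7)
The plan is to start from the stable complex-$w$ formula in~\eqref{eq:lem6:2:b}, substitute the auxiliary quantities from~\eqref{eq:def:ell1}, and show that the apparent $1/c_1$ singularity is in fact removable at $c_1=0$. As a first sanity check I verify that the definitions of $\cos\theta_1$ and $\sin\theta_1$ are consistent: computing $(2c_1\vzero+c_0)^2+\abs{w}^2$ and using $\abs{w}^2 = 4c_1\abs{\acc} - c_0^2$ (which holds because $w$ is complex, i.e., $c_0^2+4\acc c_1<0$ with $\acc<0$), we obtain $4c_1(\acczero+\abs{\acc}) = 4\ell_1^2$, so $\cos^2\theta_1 + \sin^2\theta_1 = 1$.

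Next I would rewrite the argument of the logarithm in~\eqref{eq:lem6:2:b}. Factoring $2\ell_1$ out of $2c_1\vzero+c_0$ and using $\abs{w}t/2 = \tau\sin\theta_1$ with $\tau = t\ell_1$, the sine sum identity collapses the bracket to $\sin(\theta_1+\tau\sin\theta_1)/\sin\theta_1$. Similarly $c_0/2 + c_1\vzero = \ell_1\cos\theta_1$, so $c_0 t/(2c_1) + \vzero t = \tau\cos\theta_1/c_1$. Combining these two identities gives
\[
c_1\bigl(s(t)-\vzero t\bigr) = \log\dfrac{\sin(\theta_1+\tau\sin\theta_1)}{\sin\theta_1} - \tau\cos\theta_1.
\]
Since $c_1 = \ell_1^2/(\acczero+\abs{\acc})$, dividing by $c_1$ and extracting $\ell_1^2 t^2 = \tau^2$ yields $s(t)-\vzero t = (\acczero+\abs{\acc})t^2\,Q(\tau,\cos\theta_1)$ with $Q(\tau,c) = \tau^{-2}\bigl[\log(\sin(\theta_1+\tau\sin\theta_1)/\sin\theta_1) - \tau c\bigr]$ and $\sin\theta_1 = \sqrt{1-c^2}$. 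This already accomplishes the case selection in~\eqref{eq:s:stable}: when $\abs{t\ell_1}>0.001$ the formula~\eqref{eq:lem6:2:b} is safe, otherwise we use the just-derived representation.

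The remaining work is the Taylor expansion of $Q(\tau,c)$. I would use the identity $\sin(\theta_1+u)/\sin\theta_1 = \cos u + (c/\sin\theta_1)\sin u$ with $u = \tau\sin\theta_1$; both $\cos u$ and $(\sin u)/\sin\theta_1$ contain only even powers of $\sin\theta_1$, so after substituting $\sin^2\theta_1 = 1-c^2$ each power $\tau^k$ in $\log(1+X)$ collects to a polynomial in $c$ alone. A direct check shows that the $\tau^2$ coefficient of $\log Y - \tau c$ is $-(\sin^2\theta_1+\cos^2\theta_1)/2 = -1/2$, matching the leading $-1/2$ of the stated series, while the $\tau^3$ coefficient gives $c(\sin^2\theta_1+\cos^2\theta_1)/3 = c/3$; the higher coefficients follow by the same bookkeeping.

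The main obstacle is the combinatorial effort of organizing the $\log(1+X)$ expansion cleanly enough to read off the explicit polynomials in $c$ through $\tau^5$, and then producing the remainder bound. For the latter, since $|c|\leq 1$ and $|\sin\theta_1|\leq 1$ a uniform estimate $|X|\leq C|\tau|$ holds, and the tail of $\log(1+X)$ beyond the $\tau^5$ term is dominated by a geometric series in $|\tau|$ times $|\tau|^6$; evaluating at $|\tau|\leq 10^{-3}$ yields $|\varepsilon(\tau,c)|\leq 10^{-18}$, completing the proof.
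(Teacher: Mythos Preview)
Your approach is essentially the same as the paper's: both rewrite the complex-$w$ expression for $s(t)$ via the substitution $\tau=t\ell_1$ and the angle $\theta_1$ to obtain $s(t)=(\acczero+\abs{\acc})t^2\,Q(\tau,\cos\theta_1)+\vzero t$ with the identical definition of $Q$, and then Taylor-expand $Q$ in $\tau$. The only notable difference is the remainder estimate: the paper uses the Lagrange form, writing the next coefficient explicitly as $(4c^6+114c^4+180c^2+17)/2520$, bounding it by $0.125$ for $\abs{c}\leq 1$, and evaluating $0.125\cdot(10^{-3})^6=1.25\times10^{-19}$, whereas your geometric-series sketch on the tail of $\log(1+X)$ is less explicit but would reach the same order of bound.
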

\begin{proof}
Using~\eqref{eq:s}, the definitions of the constants~\eqref{eq:def:ell1} and the angles $c_0/2=\ell_1\cos\theta_1-c_1\vzero$ we have
\begin{EQ}
   s(t)
   =
   \dfrac{\acczero+\abs{\acc}}{\ell_1^2}
   \left(
   \log\left(
   \dfrac{\sin\left(t\ell_1\sin\theta_1\right)}{\tan\theta_1}+\cos\left(t\ell_1\sin\theta_1\right)
   \right)
   -t\ell_1\cos\theta
   \right)+\vzero t.
\end{EQ}
By posing $\tau=t\ell_1$ and expanding with Taylor around $\tau$:
\begin{EQ}
   \Qfun(\tau,c)=
   \dfrac{1}{\tau^2}
   \log\left(
   \dfrac{c}{s}\sin\left(\tau s\right)+\cos\left(\tau s\right)
   \right)-\dfrac{c}{\tau}
   ,\quad s=\sqrt{1-c^2}.
\end{EQ}
The remainder can be estimated with
\begin{EQ}[rcl]
   \epsilon(\tau,c) &=&
   \dfrac{4c^6+114c^4+180c^2+17}{2520}\tau_\star^6, \qquad
   -\dfrac{\pi}{2} \leq \tau \leq \tau_\star \leq 0.
\end{EQ}
From the interval of definition $\abs{\tau}\leq 0.001$, 
we have that $\abs{\epsilon(\tau,c)}\leq 0.125 \tau_\star^6$
for $\abs{\tau}\leq 0.001$, thus the error satisfies $\abs{\epsilon(\tau,c)} \leq 1.25\times 10^{-19}$.\qed
\end{proof}

\begin{remark}
The stable computation of~\eqref{eq:s0sv0} in Corollary~\ref{cor:3.1}
for $w\approx 0$ can be done using \eqref{eq:lem6:1:b} or~\eqref{eq:s:stable}
with~\eqref{eq:t0tv0:def}. An algorithmic version of the stable computation of
the function $s(t)$ is given in Algorithm \ref{algo:space}.
\end{remark}

\section{Velocity as a function of the space $s$}\label{sec:v_of_s}
It is useful in the applications of this work, to have the velocity expressed as a function
of the space and not of the time. This is the case for example when we want to add to the pure Bang-Bang problem a limitation on the lateral acceleration or constrain the problem with the friction ellipse. Those constraints are tipically functions of the space (e.g. the curvature of the trajectory). From Corollary~\ref{cor:3.6}, the function $\tilde{v}(s)$ and its derivatives
can be computed via $t(s)$ with the relation $\tilde{v}(s) = v(t(s))$.
\begin{figure}[!tb]
  \begin{center}
    \begin{tabular}{cc}
      \subfigure{\label{fig:cases:s:b}\includegraphics[scale=0.9]{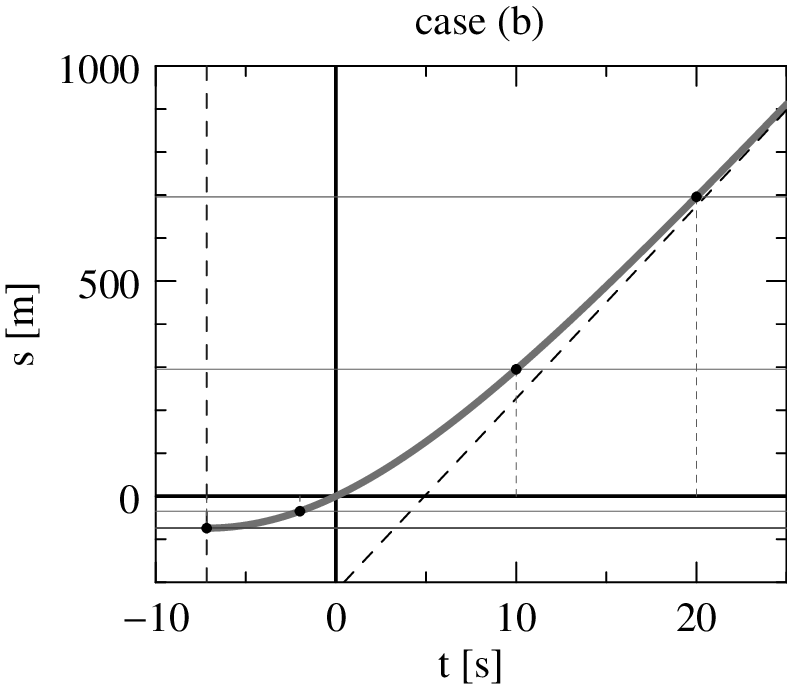}}
      &
      \subfigure{\label{fig:cases:s:c}\includegraphics[scale=0.9]{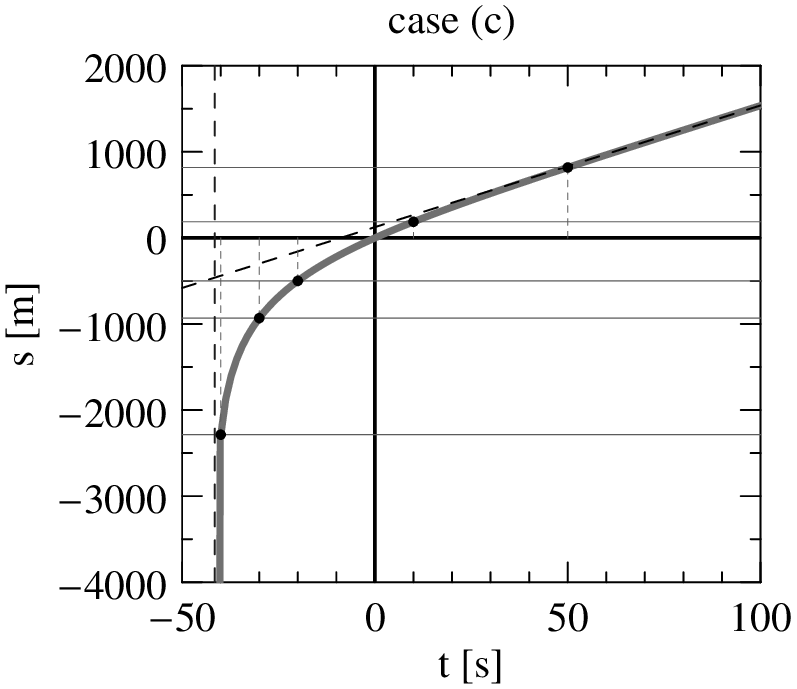}}%
      \\
      \subfigure{\label{fig:cases:s:d}\includegraphics[scale=0.9]{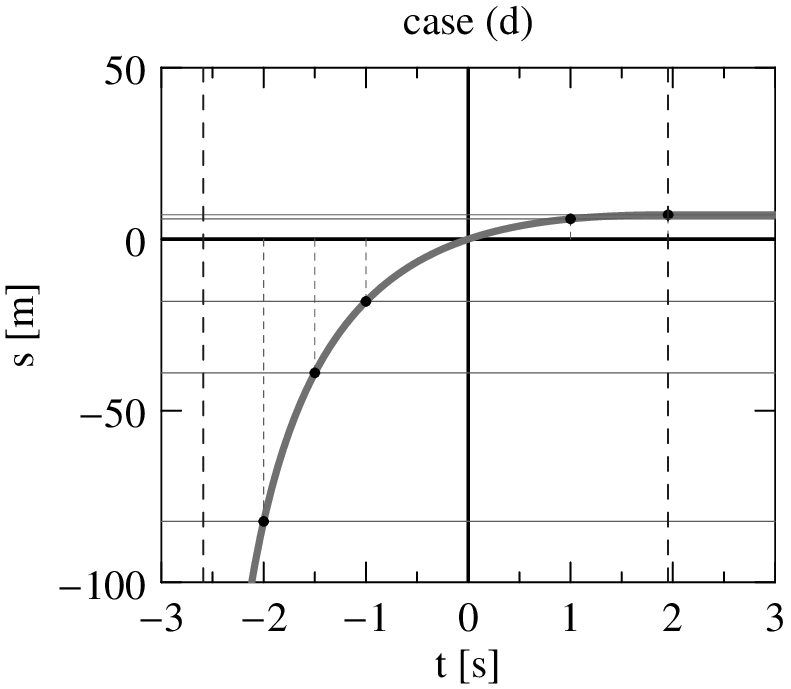}}
      &
      \subfigure{\label{fig:cases:s:e}\includegraphics[scale=0.9]{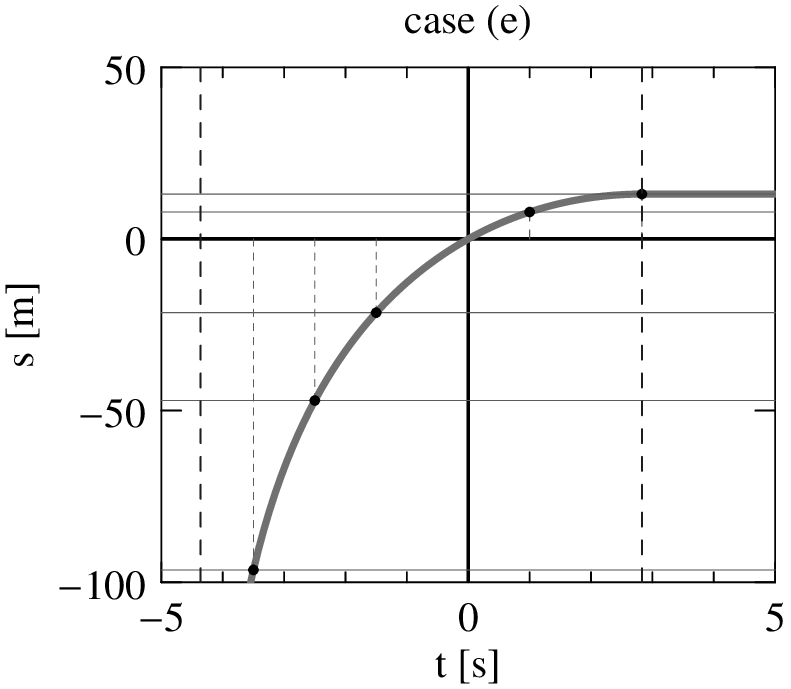}}%
      \end{tabular}
   \end{center}
   \caption{The five cases of the velocity and travelled space as listed in Lemma \ref{lem:velocity}. In all those case the acceleration
                 is constant and positive with constant control $\acc=\ap$.}
   \label{fig:cases:s}
   \end{figure}
The point of the problem is to invert the monotone function $s(t)$, or, in other words, to solve the following equation:
\begin{EQ}\label{eq:stdef}
   t(\zeta)\textrm{ solution of the problem $s(t) =\zeta$ }.
\end{EQ}
The following well-known theorem gives enough conditions for the global convergence of Newton’s method.
\begin{theorem}
  Let $f(x)$ be twice continuously differentiable on the closed finite 
  interval $[a,b]$ and let the following conditions be satisfied:
  \begin{enumerate}
    \item $f(a)f(b)<0$ ;
    \item $f'(x)\neq 0$ for all $x\in[a,b]$
    \item $f''(x)$ either $\geq 0$ or $\leq 0$ for all $x\in[a,b]$
    \item At the endpoint $a$, $b$ both $\abs{f(a)}/\abs{f'(a)}< b-a$ and  $\abs{f(b)}/\abs{f'(b)}< b-a$
          are satisfied;
  \end{enumerate}
  Then Newton's method converges to the unique solution in $[a,b]$ for any choice of the
  initial guess in $[a,b]$.
\end{theorem}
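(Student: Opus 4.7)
The plan is to prove this classical global-convergence theorem by reducing to a canonical sign configuration, showing one Newton step lands every initial guess on the ``good'' side of the root, and then exploiting monotonicity of the iteration from that point on.

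First, by replacing $f$ with $-f$ and/or $x$ with $a+b-x$, I may assume without loss of generality that $f$ is increasing and convex on $[a,b]$, i.e., $f'>0$ and $f''\ge 0$; the other three sign combinations are handled identically after these substitutions. Under this normalization, condition (1) becomes $f(a)<0<f(b)$, and together with condition (2) the intermediate value theorem delivers a unique root $\xi\in(a,b)$.

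Next, I study the Newton map $g(x)=x-f(x)/f'(x)$. Convexity of $f$ means the tangent line at any $x\in[a,b]$ lies weakly below the graph, so the point where it meets the $x$-axis sits weakly to the right of $\xi$; that is, $g(x)\ge\xi$ for every $x\in[a,b]$. It is also immediate from the signs of $f$ and $f'$ that $g(x)\le x$ for $x\ge\xi$ and $g(x)>x$ for $x<\xi$.

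The main obstacle is to show that $g(x)\le b$ for every $x\in[a,b]$, so that the very first Newton step never leaves the interval, even when $x_0$ sits on the far-left side of $\xi$ (where convexity could in principle cause overshoot in a short interval). For $x\ge\xi$ this is trivial since $g(x)\le x\le b$. For $x\in[a,\xi)$ I introduce the auxiliary function $h(x)=-f(x)-f'(x)(b-x)$, noting that $h(x)\le 0$ is equivalent to $g(x)\le b$. Differentiating,
\begin{equation*}
h'(x)=-f'(x)-f''(x)(b-x)+f'(x)=-f''(x)(b-x)\le 0,
\end{equation*}
so $h$ is nonincreasing on $[a,\xi]$. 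Condition (4) at the endpoint $a$ rewrites as $-f(a)<f'(a)(b-a)$, i.e., $h(a)<0$, whence $h(x)\le h(a)<0$ throughout $[a,\xi]$. A symmetric application of condition (4) at $b$ rules out $g(x)<a$ at the only point where this could fail (namely $x=b$, where $g(b)=b-f(b)/f'(b)>b-(b-a)=a$).

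Putting these pieces together, for any $x_0\in[a,b]$ the iterate $x_1=g(x_0)$ lies in $[\xi,b]$; from there the sequence $\{x_n\}$ is weakly decreasing and bounded below by $\xi$, hence converges to some $x^\star\ge\xi$. Passing to the limit in $x_{n+1}=x_n-f(x_n)/f'(x_n)$ and using continuity of $f$ and $f'$ (with $f'\ne 0$) gives $f(x^\star)=0$, so by uniqueness $x^\star=\xi$. The only genuinely delicate step is the monotonicity argument for $h$, which is exactly what transforms the endpoint inequalities in condition (4) into a uniform guarantee that the first Newton step stays inside $[a,b]$.
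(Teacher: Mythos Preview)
Your argument is correct. The reduction to the increasing–convex case is legitimate, the key inequality $g(x)\ge\xi$ follows from the tangent-below-graph property of convex functions, and your auxiliary function $h$ cleanly converts hypothesis~(4) at the left endpoint into the statement that the first Newton step cannot overshoot $b$. From there the monotone-convergence argument is standard and complete. One small remark: once you have established $g(x)\ge\xi$ for every $x\in[a,b]$, the lower bound $g(x)\ge a$ is automatic, so the separate appeal to condition~(4) at $b$ is redundant in the normalized configuration; that hypothesis only becomes active in the mirror cases before the WLOG reduction.

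As for comparison with the paper: the paper does not actually prove this theorem. Its entire proof consists of the citation ``See reference~[Conte:1980], Theorem~3.2 of page~104.'' Your write-up therefore supplies what the paper omits, and in fact follows the same line of reasoning one finds in standard numerical-analysis texts (including Conte--de~Boor): normalize signs, use convexity to trap the iterates on one side of the root after a single step, then invoke monotonicity. So there is no methodological divergence to discuss---you have simply filled in the cited argument.
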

\begin{proof}
  See reference~\cite{Conte:1980}, Theorem 3.2 of page 104.\qed
\end{proof}
\begin{corollary}\label{cor:13}
  Let $f(x)$ be twice continuously differentiable on the closed finite 
  interval $[a,b]$ with
  \begin{enumerate}
    \item $f(a)f(b)<0$ ;
    \item $f'(x)\neq 0$ for all $x\in[a,b]$
    \item $f''(x)\geq 0$ for all $x\in[a,b]$
  \end{enumerate}
  if $x_0\in[a,b]$ satisfies $f(x_0)\geq 0$ then the sequence
  generated by Newton's method converges monotonically to 
  the unique solution in $[a,b]$.
  Analogously if  $f''(x)\leq 0$ for all $x\in[a,b]$ and $f(x_0)\leq 0$.
\end{corollary}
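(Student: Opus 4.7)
The plan is to prove the corollary directly from convexity, without invoking the full hypotheses of the preceding theorem. First I would extract the unique root $x^\star \in (a,b)$: the intermediate value theorem combined with $f(a)f(b) < 0$ gives at least one zero, and the hypothesis $f'(x) \neq 0$ on $[a,b]$ (so $f$ is strictly monotone) rules out any second zero. Since $f'$ is continuous and nonvanishing, its sign is constant on $[a,b]$, which splits the analysis into two symmetric sub-cases. The single analytic tool needed throughout is the tangent-lies-below characterization of a convex ($f'' \geq 0$) function: $f(y) \geq f(x_n) + f'(x_n)(y - x_n)$ for every $x_n, y \in [a,b]$.

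In the case $f' > 0$, the ordering $f(a) < 0 < f(b)$ combined with $f(x_0) \geq 0$ forces $x_0 \geq x^\star$. I would show by induction that $x^\star \leq x_{n+1} \leq x_n$ and that $f(x_{n+1}) \geq 0$, which keeps the inductive hypothesis alive. Applying the tangent inequality at $y = x^\star$ yields $0 = f(x^\star) \geq f(x_n) + f'(x_n)(x^\star - x_n)$, and dividing by the positive $f'(x_n)$ gives $x^\star \leq x_n - f(x_n)/f'(x_n) = x_{n+1}$. Monotonicity $x_{n+1} \leq x_n$ follows immediately from $f(x_n) \geq 0$ and $f'(x_n) > 0$. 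Then $f(x_{n+1}) \geq f(x^\star) = 0$ because $f$ is increasing. The sequence $\{x_n\}$ is monotone decreasing and bounded below by $x^\star$, hence converges. Passing to the limit in the Newton update, together with continuity of $f$ and $f'$ and $f'\neq 0$, forces the limit to satisfy $f = 0$, i.e., to coincide with $x^\star$.

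The case $f' < 0$ is dealt with by the exact mirror argument: the sequence is increasing and bounded above by $x^\star$. Finally, the statement for $f'' \leq 0$ with $f(x_0) \leq 0$ reduces to the case already handled by replacing $f$ with $g := -f$; the Newton iteration is invariant under this sign change, $g'' \geq 0$, $g(x_0) \geq 0$, and $g(a)g(b) < 0$ still holds, so the already-established conclusion applies verbatim.

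The main obstacle is purely bookkeeping: keeping the inequalities consistent across the $f' > 0$ and $f' < 0$ sub-cases, and in particular verifying at each induction step that $f(x_{n+1}) \geq 0$ so that the same argument can be reapplied. No subtle estimates or quantitative convergence rates are required, since monotonicity plus boundedness plus continuity of $f'$ (with $f'\neq 0$) suffices to identify the limit.
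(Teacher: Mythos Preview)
Your argument is correct and is the standard convexity proof of monotone Newton convergence; the paper itself simply writes ``Omitted'' for this corollary, so there is no approach to compare against. Your plan covers all the needed steps (uniqueness of the root, the tangent-line inequality giving $x^\star \le x_{n+1}$, monotonicity from the sign of $f(x_n)/f'(x_n)$, propagation of $f(x_{n+1})\ge 0$, and the limit identification), and the reduction of the concave case via $g=-f$ is exactly the right bookkeeping shortcut.
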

\begin{proof}
  Omitted
\end{proof}

From problem \eqref{eq:stdef} the evaluation of the inverse function $t(\zeta)$
is the solution of the problem $f(t)=0$ where 
$f(t) = s(t) - \zeta$. The function $f(t)$ has the following properties depending on 
the 5 cases (summarized in Table~\ref{tab:interval}) that are function of the parameters
$\acc$, $\vzero$, $c_0$ and $c_1$:
\begin{description}
  \item[case (a)] $f(t)$ is linear and not constant; 
  \item[case (b) up to case (e)] $f'(t)\neq 0$  and  $f''(t)\geq0$ or $f''(t)\leq0$ for all $t\in(\tmin,\tmax)$.
                 Moreover for $\epsilon$ small enough $f(\tmin+\epsilon)f(\tmax-\epsilon) < 0$.
\end{description}

\begin{remark}
The function $f(t) = s(t) - \zeta$ in all cases (a) up to (e) has a unique $t$ that satisfies $f(t)=0$.
Moreover, Newton's method converges monotonically to the solution provided that initial guess $t_{\mathrm{guess}}$ 
is chosen accordingly to Corollary~\ref{cor:13}.
\end{remark}
A special initial guess for Newton iteration applied to the problem $f(t)=0$ in case (b)
is provided using the solution of the Riccati differential
equation with parameters $c_0=c_1=0$ when $\zeta\acc>0$:
\begin{EQ}
  \tguess = \begin{cases}
    \dfrac{2\zeta}{\vzero+\sqrt{4\zeta\acc+\vzero^2}} & \zeta\acc > 0 \\
    0 & \textrm{otherwise.}
  \end{cases}
\end{EQ}
Because $f(t)$ is convex, $f(\tguess)\geq 0$ and
using Corollary~\ref{cor:13} the convergence is monotone.
In the cases $(c)$ $(d)$ and $(e)$ a direct application of the Newton method 
with $\tguess=0$ can cause the iterations to exit the interval of definition of the function, 
$(\tmin,\tmax)$. To avoid this dangerous behaviour,
without using a damped Newton method, we construct the iterations as follows.
From the fact that $\lim_{t\to \tmin} f(t) = -\infty$, we can approximate the function $f(t)$
with a sequence of model functions $g_k(t)$:
\begin{EQ}
   g_k(t) = a_k - \dfrac{b_k}{t-\tmin},\quad
  a_k = f'(t_k)(t_k-\tmin)+f(t_k),\quad
  b_k = f'(t_k)(t_k-\tmin)^2,
\end{EQ}
where $a_k$ and $b_k$ were computed imposing $g_k(t_k)=f(t_k)$ and  $g'_k(t_k)=f'(t_k)$.
The step $t_{k+1}$ is obtained solving $g_k(t_{k+1})=0$:
\begin{EQ}
   t_{k+1} 
   = \tmin+\dfrac{b_k}{a_k}
   = \tmin+\dfrac{f'(t_k)(t_k-\tmin)^2}{f'(t_k)(t_k-\tmin)+f(t_k)}
   = t_k-\dfrac{f(t_k)}{f'(t_k)+\dfrac{f(t_k)}{t_k-\tmin}}.
\end{EQ}
From this construction, if $f(t_k)>0$ and $t_k>\tmin$ then $\tmin < t_{k+1} < t_k$,
if $f(t_k)<0$, then we are in the hypotheses of Corollary~\ref{cor:13} and we can safely employ
Newton's method that gives monotone convergence. In conclusion we model the iterations as stated in Algorithm~\ref{algo:newton}.

\begin{lemma}[c-d-e]
  In the cases (c)--(d)--(e) Algorithm~\ref{algo:newton}
  converges to the solution $f(\troot)=0$ with quadratic convergence.
\end{lemma}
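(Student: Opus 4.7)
The plan is to split the convergence analysis into two phases determined by the sign of $f(t_k)$: a safeguarded phase when $f(t_k) > 0$, and a pure Newton phase when $f(t_k) \leq 0$, then combine them into a quadratic rate near the root $\troot$.

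First, I would fix the sign structure on the interval of definition. From Lemma~\ref{lem:acceleration} and Table~\ref{tab:interval}, in cases (c), (d) and (e) the function $f(t) = s(t)-\zeta$ is smooth on $(\tmin,\tmax)$, strictly increasing since $f'(t) = v(t) > 0$, and concave since $f''(t) = v'(t) < 0$, with $\lim_{t\to\tmin^+} f(t) = -\infty$ and a unique interior root $\troot$ satisfying $f'(\troot) > 0$. This places the Newton phase directly in the hypotheses of Corollary~\ref{cor:13} (with $f''\leq 0$ and $f(t_k)\leq 0$), so once an iterate satisfies $f(t_k)\leq 0$, the pure Newton iterations stay in $[\troot, t_k]$ and converge monotonically to $\troot$ with the usual quadratic rate guaranteed by $f'(\troot) > 0$ and boundedness of $f''$.

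Second, for the safeguarded step used when $f(t_k)>0$ (so $t_k>\troot>\tmin$), I would rewrite the update as
\[
  t_{k+1}-\tmin \;=\; \frac{f'(t_k)(t_k-\tmin)^2}{f'(t_k)(t_k-\tmin)+f(t_k)},
\]
which is strictly positive because $f'(t_k)>0$, $f(t_k)>0$ and $t_k>\tmin$; hence $t_{k+1}\in(\tmin,t_k)$ and the iterates stay in the valid interval while decreasing strictly as long as they remain in $\{f>0\}$. Two scenarios are possible: either some iterate crosses the root with $f(t_{k+1})\leq 0$, handing control to the monotone Newton phase above, or the sequence $\{t_k\}$ is monotone decreasing and bounded below by $\troot$ with $f(t_k)>0$ for all $k$. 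In the latter case, the limit $t_\infty\geq\troot$ must satisfy $f(t_\infty)=0$ (take $k\to\infty$ in the step formula, using $f'(t_\infty)>0$ and $t_\infty-\tmin>0$), forcing $t_\infty=\troot$ by uniqueness of the root.

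Third, for the quadratic rate in the safeguarded phase, I would compare the modified step with pure Newton. Set $e_k=t_k-\troot$; since $t_k-\tmin\geq\troot-\tmin>0$ and $f'(\troot)>0$, Taylor expansion gives $f(t_k)=O(e_k)$, hence $f(t_k)/(t_k-\tmin)=O(e_k)$, while $f'(t_k)=f'(\troot)+O(e_k)$. Factoring yields
\[
  t_{k+1} \;=\; t_k-\frac{f(t_k)}{f'(t_k)}\Bigl(1+O(e_k)\Bigr) \;=\; \Bigl(t_k-\tfrac{f(t_k)}{f'(t_k)}\Bigr)+O(e_k^2),
\]
and combining with the classical Newton estimate $t_k-f(t_k)/f'(t_k)-\troot=O(e_k^2)$ gives $e_{k+1}=O(e_k^2)$. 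Together with the Newton phase this produces quadratic convergence unconditionally.

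The main obstacle is the dichotomy inside the safeguarded phase: ruling out a monotone tail that stalls strictly above $\troot$ with $f(t_\infty)>0$. The resolution is the passage to the limit above, which relies crucially on the fact that the rational model $g_k$ was built to share the pole of $f$ at $\tmin$, so the denominator $f'(t_k)+f(t_k)/(t_k-\tmin)$ stays bounded away from zero along the orbit and the only fixed point of the update map in $(\tmin,\tmax)$ is $\troot$ itself.
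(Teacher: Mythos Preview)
Your proposal is correct and follows essentially the same route as the paper: both split the analysis by the sign of $f(t_k)$, invoke Corollary~\ref{cor:13} for the pure Newton phase when $f(t_k)\leq 0$, argue that the safeguarded iterates form a monotone decreasing sequence in $(\tmin,t_k)$ whose limit must be $\troot$, and then establish the quadratic rate by a Taylor expansion showing the modified step differs from the Newton step only at order $O(e_k^2)$. Your treatment is a bit more explicit about the concavity/sign structure and the ``stalling'' obstruction, but the decomposition, the key lemma, and the asymptotic error estimate are the same as in the paper.
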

\begin{proof}
  If $f(t_{k_0})<0$ for a certain $k_0$, then the hypotheses of Corollary~\ref{cor:13} are respected,
  hence the quadratic monotone convergence follows from the convergence of Newton's method.   
  Otherwise, let  $\troot$ be the root of $f(\troot)$, because the steps are negative, we have a 
  monotone decreasing sequence bounded from below, which admits the limit $\bar{t}$.
  This limit cannot be $\bar{t}>\troot$ because  $f(\bar{t})>0$, hence the step remains different from zero. Therefore let $\varepsilon_k = t_k-\troot$, then we have
  \begin{EQ}[rcl]
     \varepsilon_{k+1} 
     &=& \varepsilon_k - \dfrac{f(t_k)-f(\troot)}{f'(t_k)+\delta_k^+(f(t_k)-f(\troot))/(t_k-\tmin)}
     \\
     &=& \varepsilon_k - \dfrac{f(\troot)+\varepsilon_kf'(t_k)+\frac{1}{2}\varepsilon_k^2f''(\omega_k)-f(\troot)}
                               {f'(t_k)+\varepsilon_k\delta_k^+ f'(\zeta_k)/(t_k-\tmin)}
     \\
     &=& \frac{1}{2}\varepsilon_k^2\dfrac{f''(\omega_k)(t_k-\tmin)+2\delta_k^+ f'(\zeta_k)}
                                         {f'(t_k)(t_k-\tmin)+\varepsilon_k\delta_k^+ f'(\zeta_k)},
  \end{EQ}
  that is,
  \begin{EQ}
    \lim_{k\to\infty} \dfrac{\varepsilon_{k+1} }{\varepsilon_k^2} = 
    \frac{1}{2}\dfrac{f''(\troot)(\troot-\tmin)+2\delta_k^+ f'(\troot)}
                                         {f'(\troot)(\troot-\tmin)}
    =
    \frac{1}{2}\dfrac{f''(\troot)}{f'(\troot)}+\dfrac{\delta}{\troot-\tmin} = C,
  \end{EQ}
  Hence the convergence is quadratic with constant $C$, which is bigger if the covergence is from the right side.\qed
\end{proof}
\begin{remark}
It is also possible to use higher order Newton methods, for example, for order 3 or 4 
see~\cite{Chun:2009}. The second derivative $f''$ is available analytically, thus the Halley 
method can be applied too~\cite{Neta:2013} (or~\cite{Neta:2014} for a survey of one dimensional methods), for higher order methods see for example~\cite{Amat:2008,Lotfi:2014}.
\end{remark}
\section{The Optimal Control Problem in the curvilinear abscissa $s$}\label{sec:ocp:s}
With the change of coordinates  from time $t$ to space $s$ as independent variable,
it is possible to reformulate the OCP~\eqref{OCP}, passing from free time to a fixed domain.
The change of variable is possible in practice using the results of the 
Section \ref{sec:v_of_s}. Let $\tilde v(s)=v(t(s))$ be the transformed function of
the velocity parametrised as a function of $s$ as showed in Corollary \ref{cor:3.6}, 
then the minimum time optimal control problem \eqref{OCP} can be reformulated as 
finding $a(s)\in[-\ab,\ap]$ that minimises 
\begin{EQ}[rcl]\label{OCP:s}
  \textrm{Minimise}
  \quad T &=& \int_0^L \dfrac{\ds}{\abs{\tilde v(s)}} \\
  \textrm{subject to:}\quad
   \tilde v'(s) &=& \dfrac{\acc(s)}{\tilde v(s)}-c_0-c_1\tilde v(s),\quad
   \tilde v(0) = v_i,\quad \tilde v(L) = v_f,\quad
   -\ab\leq \acc(s) \leq \ap,
\end{EQ}
The complete solution of the Optimal Control Problem~\eqref{OCP:s} is reduced
to finding the optimal switching point $\ssw$ such that the solution is written as
\begin{EQ}
   \tilde v(s) = \begin{cases}
     \tilde v_L(s) & s < \ssw \\
     \tilde v_R(s) & s \geq \ssw,
   \end{cases}
\end{EQ}
where $\tilde v_L(s)$ and $\tilde v_R(s)$ are the solutions of the ODE 
\begin{EQ}[rcrcll]\label{eq:odes_v_s}
   \tilde v_L'(s) &=& \dfrac{\ap}{\tilde v_L(s)}-c_0-c_1\tilde v_L(s),\quad &\tilde v_L(0)&=&v_i, \\
   \tilde v_R'(s) &=& -\dfrac{\ab}{\tilde v_R(s)}-c_0-c_1\tilde v_R(s),\quad& \tilde v_R(L)&=&v_f .\\
\end{EQ}
\begin{remark}
Indeed we notice that while it is possible to solve the dynamic system of \eqref{OCP}
as explicit functions of $v(t)$ and $s(t)$, the differential equation \eqref{OCP:s} 
for $\tilde v(s)$ admits only an implicit solution that is impractical to subsolve 
with respect to $s$. Therefore we employ the knowledge of the analytic solutions 
presented in the previous sections for $v(t)$ and $s(t)$ together with the numeric change of
variable of Section \ref{sec:v_of_s}.
\end{remark}

The computation of the switching point $\ssw$ is done equating the arcs of positive acceleration
with the corresponding arcs of negative acceleration.
It results a single nonlinear equation $g(\ssw)=0$ where:
\begin{EQ}
  g(s) = \tilde v_L(s)-\tilde v_R(s),
\end{EQ}
To solve $g(\ssw)=0$ we simply use the Newton method: the derivative of $g(s)$ is readily given by
\begin{EQ}
  g'(s)=\tilde v'_L(s)-\tilde v'_R(s) =
  \dfrac{\ap}{\tilde v_L(s)}+\dfrac{\ab}{\tilde v_R(s)}+c_1\left(\tilde v_R(s)-\tilde v_L(s)\right)
\end{EQ}
and the latter is deduced from~\eqref{eq:odes_v_s}.
\section{Numerical Tests}\label{sec:tests}
We present herein four numerical tests for various choices of the parameters of the problem,
namely we consider the parameters $c_0$, $c_1$, $\ap$ and $\ab$. 
The tests are done with the stable formulas for velocity and space of Section~\ref{sec:stable} 
with the change of variable from $t$ to $s$ explained in Section~\ref{sec:v_of_s}.\\
It is worth noticing and pointing out that although the solutions of the Riccati differential
equation derived in Section~\ref{sec:solution} are analytic, a direct na\"ive implementation
of such equations leads to numerical instabilities, which is the main motivation that led to
this study. In particular, things go wrong when the friction parameters $c_0$ and $c_1$ are near 
to zero, producing a large error in the division by (near) zero coefficients, or, 
even if they are far from being zero, but $w$ results zero or almost zero, 
for example when the acceleration is negative (braking phase) and 
$c_0^2-4\acc c_1 \approx 0$. The instabilities are also evident in the computation of the 
time and space domains of the equations.
\begin{table}[!tb]
  \begin{center}
  \caption{Table of fixed and varying parameters}\label{tab:2}
  \def\arraystretch{1.2}
  \begin{tabular}{c}
  \begin{tabular}{lllllll}
    \multicolumn{7}{c}{Fixed Parameters} \\ \hline
    $v(0)$ \footnotesize [m/s] &
    $v(L)$ \footnotesize[m/s] &
    $L$\footnotesize [m] & 
    $\ap$ \footnotesize [m/s$^2$] & 
    $\ab$ \footnotesize [m/s$^2$] &
    $c_0$ \footnotesize [1/m] &
    $c_1$ \footnotesize [1/s] \\ \hline
    $6$ & $5$ & $100$ & $2$ & $2$ & $0.01$ & $0.01$ \\
    \hline
  \end{tabular}
  \\[2em]
  \begin{tabular}{c|lllllllll}
    \multicolumn{10}{c}{Varying Parameters} \\ \hline
    $c_0$ \footnotesize [1/m]     & $0$ & $10^{-5}$ & $0.01$ & $0.05$ & $0.1$ & $0.2$ & $0.3$ & $0.4$ & $0.5$ \\ \hline
    $c_1$ \footnotesize [1/s]     & $0$ & $0.005$ & $0.01$ & $0.02$ & $0.03$ \\ \hline
    $\ap$ \footnotesize [m/s$^2$] & $10^{-6}$ & $0.01$ & $0.05$ & $0.1$ & $0.25$ & $1$ & $2$ & $10$ \\ \hline
    $\ab$ \footnotesize [m/s$^2$] & $10^{-6}$ & $0.01$ & $0.05$ & $0.1$ & $0.25$ & $1$ & $2$ & $10$ \\ \hline
  \end{tabular}
  \end{tabular}
  \end{center}
\end{table}

In the first test we  fix the initial/final positions with their speeds and we compute the time
optimal control problem with different values of the friction coefficient $c_0$. The fixed parameters
and the coefficients for the aerodynamic drag are resumed in Table~\ref{tab:2}. The result is 
showed in Figure~\ref{fig:test}.1, the curves are ordered from top to bottom according to the 
increasing value of $c_0$: the top most curve is for $c_0=0$ and the last curve on 
the bottom is the one for $c_0=0.5$. Starting from above, the first five curves represent 
a typical situation of a positive acceleration that produces an increment in the velocity, 
followed by a braking phase. The flat zones indicate that the asymptotic speed $\vinf$ has 
been reached. The sixth curve shows a case where the friction is too intense and a positive 
acceleration produces a reduction of the speed that settles at $\vinf$; the final condition is then 
met by a rapid braking phase. The last two curves (dashed lines) at the bottom of 
Figure~\ref{fig:test}.1 show two infeasible manoeuvres, which means the friction is 
too high (or equivalently the acceleration $\ap$ is too weak) to reach the final point. 
Also here the flat zones indicate the asymptotic velocity $\vinf$.\\
The second example shows the behaviour of changing the aerodynamic coefficient $c_1$. We keep the 
parameters previously fixed in Table~\ref{tab:2} and consider the values for $c_1$ from $c_1=0$ to 
$c_1=0.03$ as in Table~\ref{tab:2}. The result is showed in Figure \ref{fig:test}.2, where 
the curves are ordered from top to bottom according to increasing values of $c_1$, e.g. the higher
the friction, the slower the velocity.\\
The third experiment makes the value of $\ap$ change from $\ap = 10^{-6}$ to $\ap=10$. The 
other values are kept as in the previous tests. The curves of Figure \ref{fig:test}.3 
are ordered from top to bottom according to the decreasing values of $\ap$. The first 
four are admissible manoeuvres that share a common deceleration curve, the last four 
curves (dashed) are non admissible solutions because the acceleration $\ap$ is not strong 
enough to reach the final position.\\
The fourth test makes the value of $\ab$ change from  $\ab = 10^{-6}$ to $\ab=10$. 
The curves of Figure \ref{fig:test}.4 are ordered from  top to bottom according to the increasing
values of $\ab$, that is, the stronger is the braking, the shorter is the braking phase. The 
curves are admissible manoeuvres that share a common acceleration curve and meet in the final 
point.

\begin{figure}[!tb]
  \begin{center}
    \begin{tabular}{cc}
      \subfigure[Test 1, variation of $c_0$.]{\includegraphics[scale=0.5]{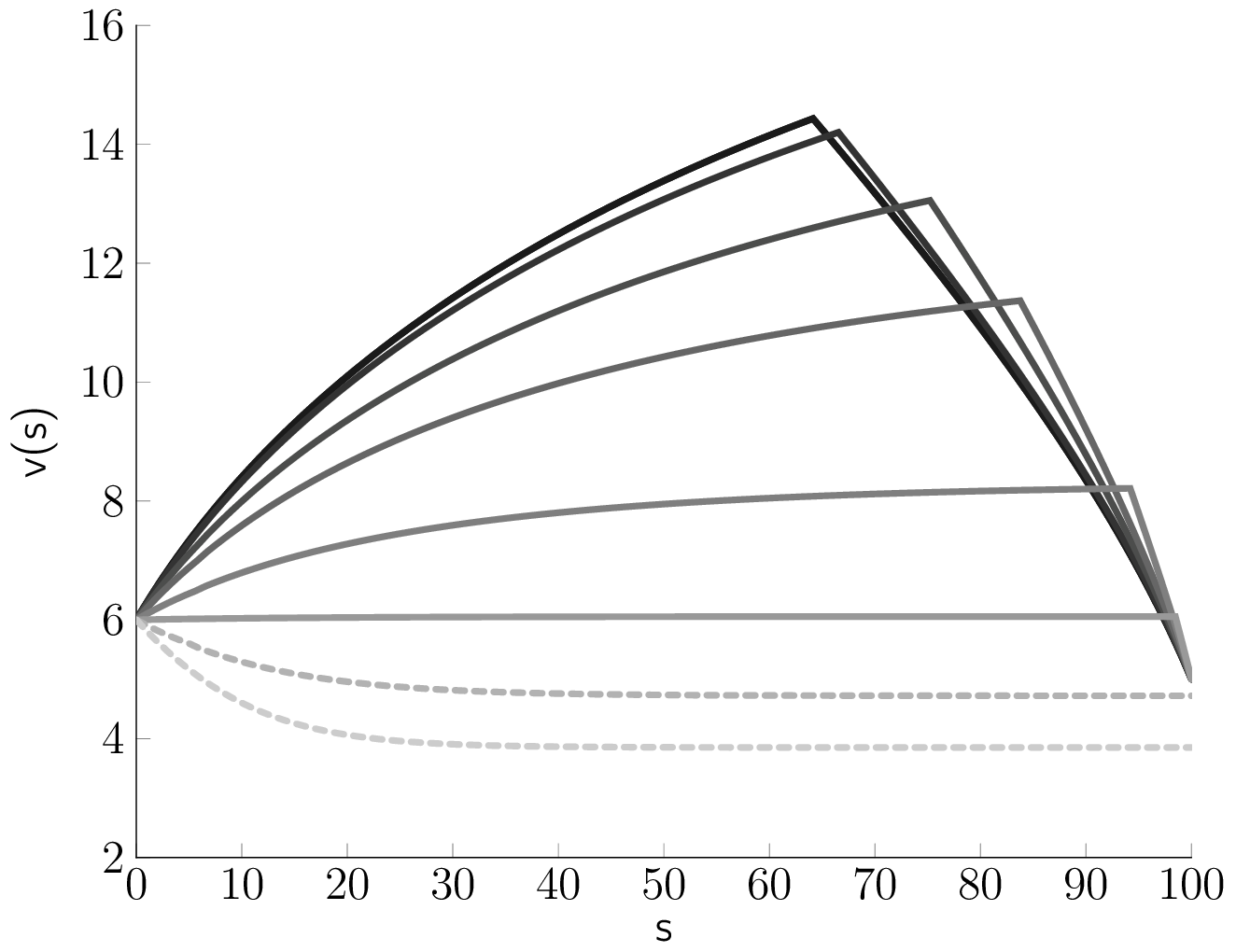}}
      &
      \subfigure[Test 2, variation of $c_1$.]{\includegraphics[scale=0.5]{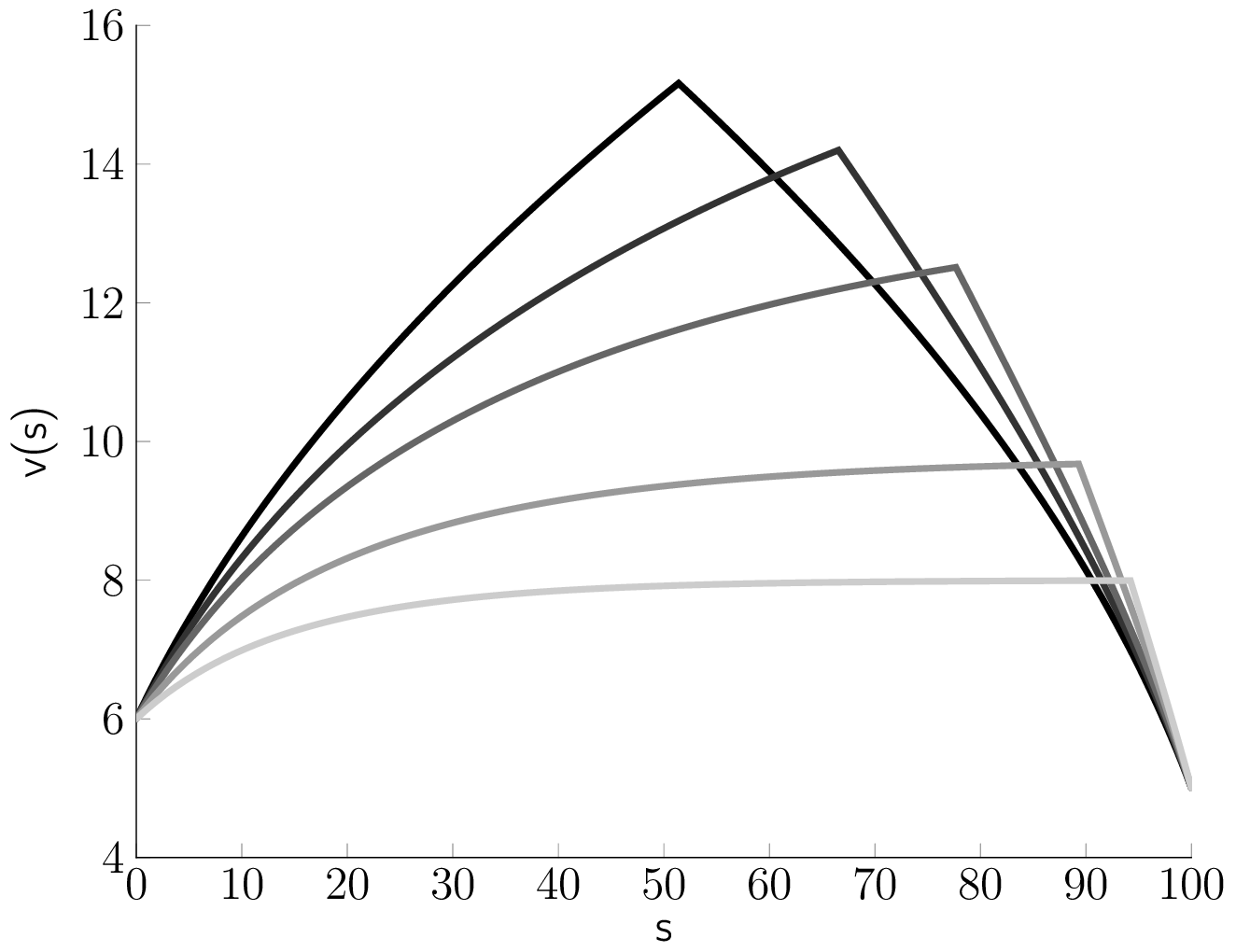}}
      \\
      \subfigure[Test 3, variation of $\ap$.]{\includegraphics[scale=0.5]{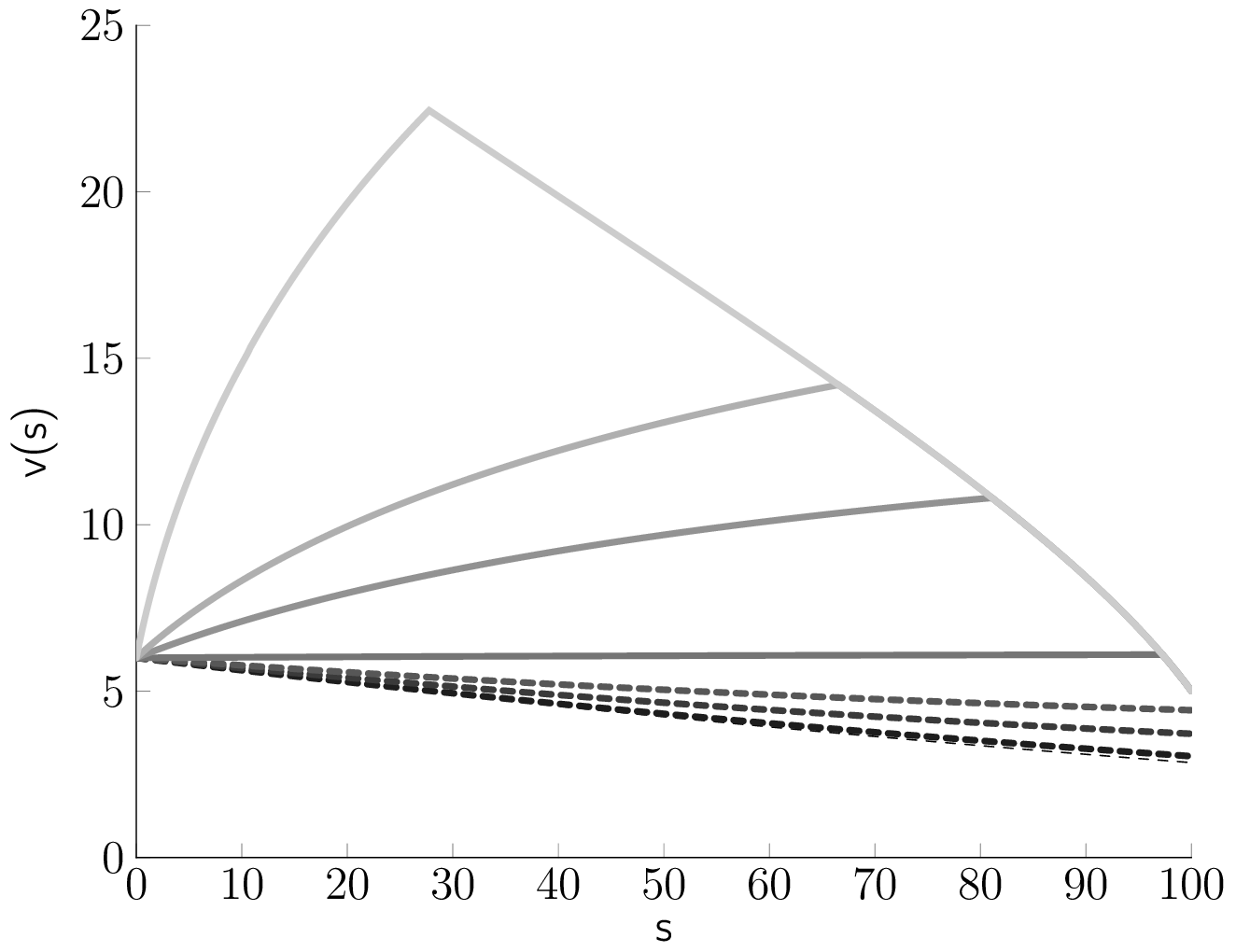}}
      &
      \subfigure[Test 4, variation of $\ab$.]{\includegraphics[scale=0.5]{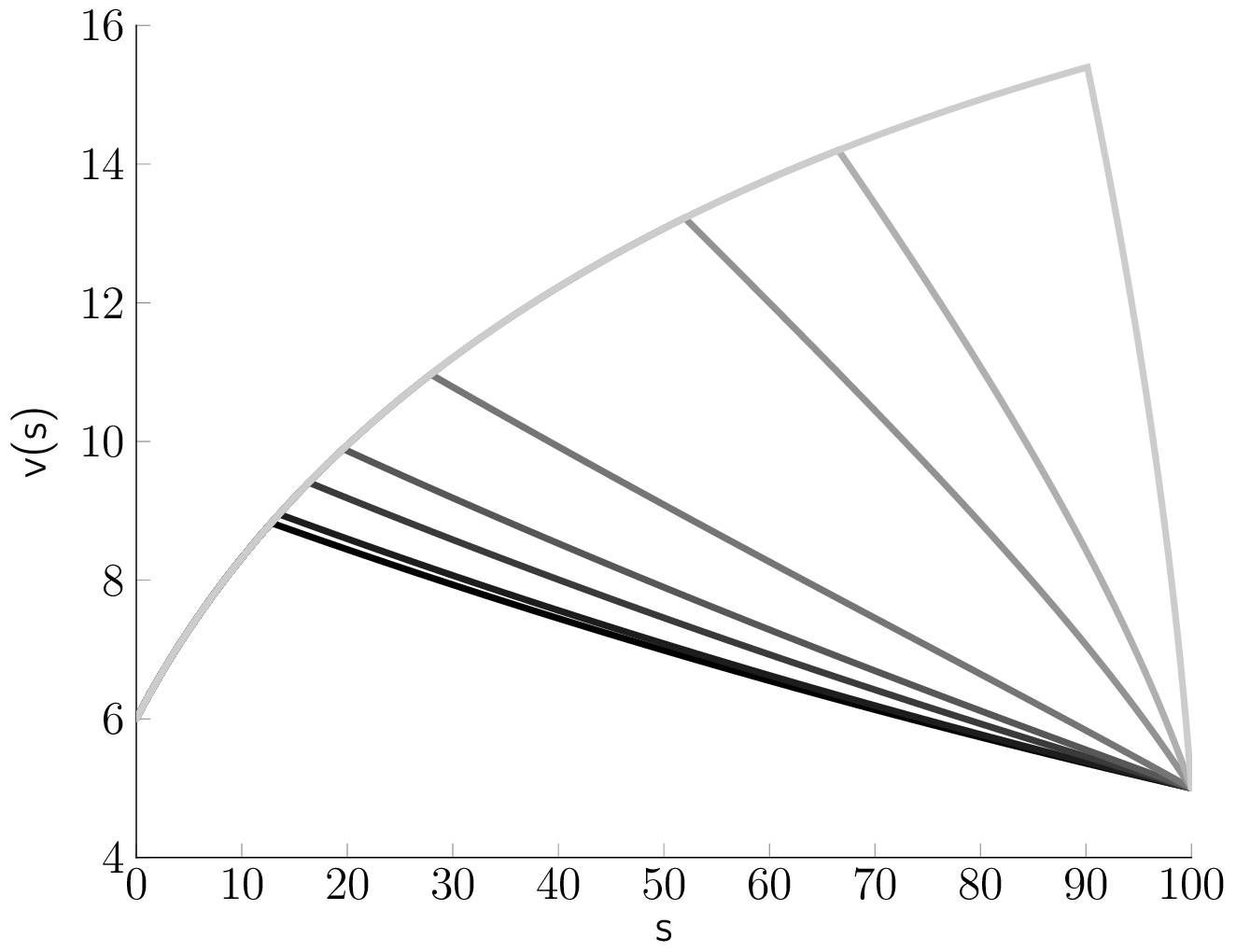}}
      \end{tabular}
   \end{center}
   \caption{The four pictures show the graphical result of the numerical tests.
            Solid lines are feasible manoeuvres while dashed lines are unfeasible manoeuvres.
            Parameters of the computed solutions are taken from Table~\ref{tab:2}}
   \label{fig:test}
   \end{figure}

\section{Conclusions, Future Work and Applications}
In this paper we developed an accurate and fast numerical algorithm for 
the semi-analytic solution for the time optimal control of a car-like model from an initial to a final position and velocity.
The novelty of the OCP is the presence of the quadratic term in the ODE for the velocity, which
yields a nonlinear differential equation of Riccati kind. The nature of the OCP being Bang-Bang
allows the analytic solution of the ODEs and thus the reduction of the problem to the numeric
solution of a curve intersection. An immediate consequence is the avoidance of the complete
numeric integration of the dynamic system and hence a very low computational time. A rough estimate
of the computational cost for the present semi-analytic solution of the OCP is around 
1 microsecond with a C++ implementation, against a pure numerical solution with Pins or GPOPS-II 
of around 1 second.\\
We successfully used it in the solution of the more general optimal control problem of steering 
a car-like model with the additional constraint on the maximum lateral acceleration and 
a given trajectory expressed as clothoid curve, see \cite{ECC:2016}. The effectiveness of
the algorithm presented in this paper was mandatory in the construction of the tool, because
during the optimisation process, many instances of the Bang-Bang algorithm were required, most
of them with parameters that came from other computational processes, yielding non standard 
situations. The na\"ive implementation of the  analytic solution was found to be insufficient
for that purpose, therefore we developed the stable formulas.\\
As a future work based on the present building block, we want to solve the optimal control
herein presented with the limitation on the lateral acceleration and with a trajectory given 
by fixed sequence of clothoids, not just one. Another future study is the design of a tool that gives
the time optimal trajectory on a road, in terms of clothoids, combining the present module with
a planning algorithm. 

\section*{Acknowledgements}
The research activity described in this paper has received
funding from the University of Trento within the project
``OptHySYS - Optimisation techniques for Hybrid dynamical SYStems: from theory to applications''.

\section*{Appendix: Algorithms collected}

\newcommand{\assign}{\leftarrow}
\setlength\columnsep{3pt} 

\begin{algorithm}[H]
  \SetAlgoLined
  \SetKwProg{Fn}{procedure}{}{}
  \KwData{$c_0$,$c_1$,$\vzero$,$\acc$}
  \KwResult{Computation of constants for cases (a)--(e)}
  \eIf{$c_0^2+4\acc c_1\geq 0$}{
    Compute $w$, $\alpha$, $\beta$, $\gamma$, $\vinf$, $\acczero$ with equation~\eqref{eq:constants}\;
    Compute $\tzero$, $\tinf$ with equation~\eqref{eq:t0:def} using expansion~\eqref{eq:aux_functions}\;
    \uIf(\quad\tcp*[h]{case (d)}){$\acc<0$}{
      $\tmax\assign\tzero\assign\Lfun\left(\vzero/(\acc+\vzero\beta),w\right)$\;
      \leIf{$\gamma>w$}{$\tmin\assign\tinf\assign\Lfun\left(\gamma^{-1},w\right)$}
           {$\tmin\assign\tinf\assign-\infty$}
      $\smin\assign-\infty\,;$\quad
      $\smax\assign\Lfun(\acc\Gfun(\tzero)-\vzero\Efun(-\tzero,w)\exp(-\beta\tzero),c_1 )$
      \qquad\tcp*[h]{\textrm{Equation \eqref{eq:lem6:1:b} with $t=\tzero$}}
    }
    \uElseIf(\quad\tcp*[h]{case (c)}){$\vzero<\vinf$}{
      $\smin\assign-\infty;$\quad$\smax\assign\infty;$\quad$\tmax\assign\infty$\;
      \leIf{$\gamma>w$}{$\tmin\assign\tinf\assign\Lfun\left(\gamma^{-1},w\right)$}
           {$\tmin\assign\tinf\assign-\infty$}
    }
    \uElseIf(\quad\tcp*[h]{case (b)}){$\vzero>\vinf$}{
      $\tmin\assign\tzero\assign\Lfun\left(\vzero/(\acc+\vzero\beta),w\right)$\quad
      $\tmax\assign\infty;$\quad$\smax\assign\infty$\;
      \lIf
      {$w>0.1$}{$\smin\assign (\Lfun(\vzero,\alpha/\acc) + \Lfun(-\vzero, c_1/\alpha ) )/ w$
      \qquad \tcp*[h]{Equation \eqref{eq:s0sv0} with $t=\tzero$}}
      \lElse{$\smin\assign\Lfun(\acc\Gfun(\tzero)-\vzero\Efun(-\tzero,w)\exp(-\beta\tzero),c_1 )$
      \qquad\tcp*[h]{Equation \eqref{eq:lem6:1:b} with $t=\tzero$}}
    }
    \Else(\quad\tcp*[h]{case (a)}){
      $\tmin\assign-\infty;$\quad
      $\tmax\assign\infty;$\quad
      $\smin\assign-\infty;$\quad
      $\smax\assign\infty$\;
    }
  }(\tcp*[h]{case (e)}){
    Compute $\abs{w}$, $\theta$, $\theta_0$, $\theta_1$, $\acczero$, $\ell_1$
    with equation~\eqref{eq:constants} and \eqref{eq:def:ell1}
    and $\tvzero$ with equation~\eqref{eq:t0:def}\;
  }
  \caption{Setup constants for forward computation of $v(t;c_0,c_1,\vzero,\acc)$ and  $s(t;c_0,c_1,\vzero,\acc)$}
  \label{algo:setup}
\end{algorithm}

\begin{algorithm}[H]
  \SetAlgoLined
  \SetKwFunction{setup}{setup}
  \SetKwProg{Fn}{procedure}{}{}
  \KwData{$t$ and constant computed with Algorithm~\ref{algo:setup}}
  \KwResult{Computation of $v(t)$}
  \Switch{ which computational case? }{
  \Case{case (a)-(b)-(c)-(d)}{
    \uIf{ $t \leq  0$ \textbf{or} $t > 1$}{
       $q\assign 1 + (\gamma-w)\Efun(-t,w)$\;
       \lIf{ $\abs{t-\tzero} < 100\epsilon$}{ 
          $p\assign (\alpha\vzero-\acc)\Efun(t-\tzero, w );\;$ \Return $p/q$
       }
       \lElse{$p\assign (\acc-\acczero)\Efun(-t,w);\;$\Return $\vzero+p/q$}
    }
    \Else{
      \leIf{$\abs{t-\tinf} < 100\epsilon$}{$q\assign(w-\gamma)\Efun(t-\tinf,w)$}
           {$q\assign 1 - \gamma\Efun(t,w)$}
      \lIf{ $\abs{t-\tzero} < 100\epsilon$}
        {$p\assign(\alpha\vzero-\acc)\Efun(t-\tzero,w);\;$ \Return $p/q$}
      \lElse{$p\assign(\acczero-\acc)\Efun(t,w);\;$\Return $\vzero+p/q$}
     }
   }
   \Case{case (e)}{
      $c\assign \cos(t\abs{w}/2);$\quad
      $s\assign \Sfun(t/2,\abs{w})$\quad
      $p\assign \vzero c-(c_0\vzero-2\acc)s;$\quad
      $q\assign c+(2c_1\vzero+c_0)s$\;
      \Return{$p/q$}
    }
  }
  \caption{Compute $v(t)\DEF v(t;c_0,c_1,\vzero,\acc)$}
  \label{algo:velocity}
\end{algorithm}

\begin{algorithm}[H]
  \SetAlgoLined
  \SetKwFunction{setup}{setup}
  \SetKwProg{Fn}{procedure}{}{}
  \KwData{$t$ and constant computed with \setup($c_0$,$c_1$,$\vzero$,$\acc$)}
  \KwResult{Computation of $s(t)$}
  \Switch{ which computational case? }{
  \Case{case (a)-(b)-(c)-(d)}{
    \lIf{ $t\geq\tmax$}{\Return $\smax$}
    \lElseIf{ $t \leq  \tmin$}{\Return $\smin$}
    \lElse{$\Return\Lfun(\acc\Gfun(t)-\vzero\Efun(-t,w)\exp(-\beta t),c_1 )$}
  }
  \Case{case (e)}{
    \eIf{ $\abs{ t\ell_1}\leq 0.001$ }{
      \Return $((\abs{\acc}+\acczero)\Qfun( t\ell_1,\cos(\theta_1))t+\vzero)t$
    }{
      $\arg\assign (2c_1\vzero+c_0)\Sfun(t/2,\abs{w})+\cos(t\abs{w}/2)$\;
      \leIf{ $arg\leq 0$}{\Return $-\infty$}
           {\Return $(\log(\arg) - c_0t/2)/c_1$}
    }
  }
  }
  \caption{Compute $s(t)\DEF s(t;c_0,c_1,\vzero,\acc)$}
  \label{algo:space}
\end{algorithm}

\begin{algorithm}[H]
  \SetAlgoLined
  \SetKwProg{Fn}{procedure}{}{}
  \KwData{$\tguess$}
  \KwResult{Computation of $t$ such that $f(t)=0$}
  $t\assign\tguess$\;   
  \Repeat{$\abs{\delta t}<\epsilon$}{
    \leIf{ $f(t)>0$}{
      $\delta t\assign f(t)\big/\big(f'(t)+f(t)/(t-\tmin)\big)$ 
    }{
      $\delta t\assign f(t)/f'(t)$  
    }
    $t\assign t-\delta t$ \;   
  }
  \Return $t$\;
  \caption{Modified Newton for the computation $f(t)=0$ when $\tmin>-\infty$}
  \label{algo:newton}
\end{algorithm}

\bibliographystyle{spmpsci}
\bibliography{OCP_pure_BangBang-biblio}

\end{document}